\newtheorem{prop}{Proposition}[section]
\newtheorem{coro}[prop]{Corollary}
\newtheorem{thm}[prop]{Theorem}
\newtheorem*{thm*}{Theorem}
\newtheorem{lemma}[prop]{Lemma}
\newtheorem*{conjecture*}{Conjecture}
\theoremstyle{definition}
\newtheorem*{ackn}{Acknowledgments}
\newcommand{\D}{\mathcal{D}}
\newcommand{\GL}{\mathrm{GL}}
\newcommand{\pt}{\mathrm{pt}}
\DeclareMathOperator{\AGL}{AGL}
\DeclareMathOperator{\AU}{AU}
\DeclareMathOperator{\ASp}{ASp}
\DeclareMathOperator{\AO}{AO}
\DeclareMathOperator{\AX}{AX}
\DeclareMathOperator{\U}{U}
\DeclareMathOperator{\Sp}{Sp}
\DeclareMathOperator{\Or}{O}
\DeclareMathOperator{\X}{X}
\DeclareMathOperator{\rank}{rank}
\DeclareMathOperator{\sign}{sign}
\definecolor{myblue}{HTML}{007ACC}
\title{On the proportion of derangements \\in affine classical groups}
\author[J.~Anzanello]{Jessica Anzanello}
\address{Dipartimento di Matematica e Applicazioni, University of Milano-Bicocca, Via Cozzi 55, 20125 Milano, Italy
}
\email{j.anzanello@campus.unimib.it}
\date{}
\begin{document}

\keywords{Derangements, affine classical groups, cycle index, partitions \\
}

\subjclass{Primary: 20B05. Secondary: 05A17, 05A19}
\begin{abstract}
We derive exact formulas for the proportions of derangements and of derangements of $p$-power order in the affine classical groups $\AU_m(q)$, $\ASp_{2m}(q)$, $\AO_{2m+1}(q)$ and $\AO^{\pm}_{2m}(q)$, where $p$ denotes the characteristic of the defining finite field.

In the unitary case, the formulas rely on a result on partitions of independent interest: we obtain a generating function for integer partitions $\lambda=(\lambda_1, \dots, \lambda_m)$ into $m$ parts, with $\lambda_1\ge \dots \ge \lambda_m$, such that either $\lambda_1=1$ or $\lambda_{k-1}>\lambda_k=k$ for some $k \in \{2, \dots,m\}$.

In the symplectic and orthogonal cases, the proofs of the formulas reduce to verifying three $q$-polynomial identities conjectured by the author and later proved by Fulman and Stanton.
\end{abstract}
\date{First version: August 2025; revised version: February 2026}
\maketitle

\tableofcontents
\section{Introduction}
Let $G$ be a non-trivial finite transitive permutation group on a set $\Omega$. An element $g \in G$ is a \textit{derangement} if it has no fixed points on $\Omega$, that is, if $\alpha^g \neq \alpha$ for any $\alpha \in \Omega$. 

Let $\mathcal{D}(G)$ be the set of derangements of $G$ and let $$\delta(G) \coloneqq \frac{|\D(G)|}{|G|}$$ 
denote the proportion of derangements in $G$, which one can view as the probability that a randomly chosen element of $G$ is a derangement.
A result going back to Jordan asserts that $\delta(G)>0$, and, as discussed by Serre in \cite{Serre}, Jordan's theorem has significant implications in diverse areas, ranging from number theory to topology.

The study of derangements in transitive permutation groups has a long history: in 1708, Montmort, in his foundational work on probability theory \cite{Montmort},  introduced the well-known ``inclusion-exclusion principle'' to obtain the following formula for the proportion of derangements in the symmetric group $S_n$ of degree $n$:
\begin{equation*}
    \delta(S_n)=\sum_{i=0}^{n} \frac{(-1)^i}{i!}.
\end{equation*}

In more recent years, an extensive body of research has developed around counting derangements and investigating derangements with specific properties, such as prescribed order. For a comprehensive survey on the subject, we refer the reader to the introductory chapter of \cite{BurGiu}.

In \cite{SpigaAGL}, Spiga obtained a notably simple and explicit formula for the proportion of derangements in the affine general linear group $\AGL_m(q)$, which can be viewed as a natural $q$-analogue of the corresponding problem for the symmetric group, namely:
$$
\delta(\AGL_m(q))=\sum_{i=1}^{m}\frac{(-1)^{i-1}}{q^{i(i+1)/2}}.
$$
Let $\U_m(q) \le \GL_m(q^2)$, $\Sp_{2m}(q) \le \GL_{2m}(q)$, 
$\Or_{2m+1}(q) \le \GL_{2m+1}(q)$ and $\Or^{\pm}_{2m}(q) \le \GL_{2m}(q)$ denote the isometry groups of, respectively, a non-degenerate unitary, symplectic, quadratic and quadratic of plus (resp. minus) type form over the field $\mathbb{F}=\mathbb{F}_{q^e}$, where $e=2$ in the unitary case, and $e=1$ otherwise. Moreover, let $p$ denote the characteristic of $\mathbb{F}$. In this article we establish exact formulas for the proportions of derangements, as well as for derangements of $p$-power order, in the affine classical groups 
\begin{equation}
\label{affineclassical}
\AU_m(q)\text{, } \ASp_{2m}(q)\text{, } \AO_{2m+1}(q) \text{ and }\AO^{\pm}_{2m}(q),
\end{equation} 
in their natural actions.
The main result obtained is the following theorem.
\begin{thm}
\label{main}
The following formulas hold.
\begin{enumerate}
\item
\label{Unitary} 
$\displaystyle \delta(\AU_m(q))=\frac{1}{q+1}\left(1-\frac{1}{(-q)^{m(m+3)/2}}\right);$
\item
\label{Symplectic} 
$\displaystyle \delta(\ASp_{2m}(q))=\frac{1}{q+1}\left(1-\frac{1}{(-q)^{m(m+2)}}\right);$
\item
\label{Orthogonal_odd}
$\displaystyle \delta(\AO_{2m+1}(q))=\frac{1}{2}+ \frac{(-1)^{m-1}}{2q^{(m+1)^2}};$
\item
\label{Orthogonal_even}
$\displaystyle \delta(\AO^{\pm}_{2m}(q))=\frac{1}{2} \pm \frac{(-1)^{m-1}}{2q^{m(m+1)}}$.
\end{enumerate}
\end{thm}

Given a non-negative integer $j$ and an indeterminate $x$, we define
\begin{equation}
\label{(x)_j}
(x)_j \coloneqq \begin{cases}
    1 & \text{ if }j=0, \\
    \prod_{i=1}^j (1-x^i)=(1-x)(1-x^2)\cdots(1-x^j) & \text{ if } j \ge 1.
\end{cases}
\end{equation}
Further, if $G$ is one of the affine classical groups listed in  Equation \eqref{affineclassical}, we denote by $\delta_p(G)$ the proportion of derangements of $p$-power order in $G$: this quantity plays a crucial role in the computation of the exact formulas for $\delta(G)$, and we obtain the following result.

\begin{thm}
\label{p_main}
The following formulas hold.
\begin{enumerate}
\item \label{p_Unitary}
$\displaystyle \delta_p(\AU_m(q))=
\frac{1}{q^m(q+1)}\sum_{i=1}^m\frac{(-1)^i((-q)^{i+1}-1)}{(-q)^{i(i+1)/2}(-1/q)_{m-i}};$
\item \label{p_Symplectic}
$\displaystyle \delta_p(\ASp_{2m}(q))=\frac{1}{q^m(q+1)}\sum_{i=1}^{m}\frac{(-1)^{i-1}(q^{2i+1}+1)}{q^{i(i+1)}(1/q^2)_{m-i}};$
\item \label{p_Orthogonal_2m+1}
$\displaystyle \delta_p(\AO_{2m+1}(q))=\frac{1}{2q^m(1/q^2)_{m}}+\frac{1}{2q^{m+1}}\sum_{i=0}^{m}\frac{(-1)^{i-1}}{q^{i(i+1)}(1/q^2)_{m-i}}$, with $p$ odd;
\item \label{p_Orthogonal_2modd}
$\displaystyle \delta_p(\AO^{\pm}_{2m}(q))=\frac{q^m\pm1}{2q^{2m}}\sum_{i=1}^{m}\frac{(-1)^{i-1}}{q^{i(i-1)}(1/q^2)_{m-i}}$, with $p$ odd;
\item \label{p_Orthogonal_2meven}
$\displaystyle \delta_2(\AO^{\pm}_{2m}(q))=\frac{1}{2q^{m-1}(1/q^2)_{m-1}}\pm \frac{1}{2q^{2m}}\sum_{i=1}^{m}\frac{(-1)^{i-1}}{q^{i(i-1)}(1/q^2)_{m-i}}.$
\end{enumerate}
\end{thm}

The proof of Theorem \ref{p_main}\eqref{p_Unitary} relies on delicate arguments involving integer partitions. 
In particular, we derive a generating function for the following class of partitions.

Let $$\lambda=(\lambda_1,\dots,\lambda_m)$$ denote a partition of some non-negative integer $|\lambda|=\sum_{i=1}^{m}\lambda_i$ into $m$ parts $\lambda_1 \ge \lambda_2 \ge \dots \ge \lambda_m>0$.
For $m \in \mathbb{N}$, let
\begin{equation}
\label{Lambda_m}
\Lambda_m\coloneqq\{\lambda=(\lambda_1,\dots,\lambda_m)\mid \lambda_1=1 \text{ or } \lambda_{k-1}>\lambda_k=k\text{ for some }k \in \{2,\dots,m\} \}.
\end{equation}

\begin{thm}
\label{gen_fun_U}
Let $m \in \mathbb{N}$.
The generating function for the number of partitions in $\Lambda_m$ is 
$$\sum_{\lambda \in \Lambda_m}x^{|\lambda|}
=\frac{x^m}{1-x}\sum_{i=1}^{m}\frac{(-1)^{i}x^{i(i-1)/2}(x^i-1)}{(x)_{m-i}}.
$$
\end{thm}

Table ~\ref{exgenfun} provides a few small examples illustrating the content of Theorem \ref{gen_fun_U}.
\begin{table}[h]\centering
\caption{Small examples for Theorem \ref{gen_fun_U} }
\label{exgenfun}
\begin{tabular}{lccc} \toprule
      $m$             & $\Lambda_m$    & $\sum_{\Lambda_m}x^{|\lambda|}$ & first terms in   $\sum_{\Lambda_m}x^{|\lambda|}$  \\ \midrule
$1$ & $(1)$  & $x$   \\[1em]  
$2$  & $\begin{array}{c}(1,1), \\ (k,2) ,\text{ } k\ge 3 \end{array}$  & $x^2+\frac{x^5}{1-x}$ & $x^2+x^5+x^6+x^7+x^8+\dots
$  \\[1.5em]  
$3$  &$\begin{array}{c} (1,1,1),\\ 
(k,2,1),\text{ }k\ge3, \\
(k,2,2),\text{ }k\ge3, \\(j,k,3),\text{ }j\ge k\ge4
\end{array}$  &$\frac{x^3}{1-x}\left(\frac{1}{1-x^2}-x-x^2+x^3-x^6\right)$ & $\begin{array}{l} x^3+x^6+2x^7+2x^8+2x^9+\\2x^{10}+
3x^{11}+3x^{12}+4x^{13}+\\4x^{14}+5x^{15}+\dots
\end{array}$ \\\bottomrule
\end{tabular}
\end{table}

Regarding the affine symplectic and orthogonal groups, the proofs of Theorem \ref{p_main}~\eqref{p_Symplectic}-\eqref{p_Orthogonal_2meven} reduce to the verification of three $q$-polynomial identities, stated in Theorem \ref{conj_identities}. These identities were conjectured by the author in the first version of this paper, and posted on the arXiv.
While the present paper was under revision, they were proved by Fulman and Stanton in \cite{FulmanStanton25}, and therefore now appear as theorems in the final version.
Their proof reveals interesting connections with symplectic and orthogonal Cohen-Lenstra type distributions and with hypergeometric series. Before stating the identities, we fix some notation for integer partitions. Let $\lambda$ be a partition of a non-negative integer $|\lambda|$. We denote by $m_i(\lambda)$ the number of parts of size $i$, and by $\lambda'_i=\sum_{j\ge i}m_j(\lambda)$ the number of parts of size at least $i$. Furthermore, we let $o(\lambda)$ denote the number of odd parts of $\lambda$. 

\begin{thm}[Fulman and Stanton, \cite{FulmanStanton25}]
\label{conj_identities}
The following identities hold.
\begin{enumerate}
\item
\label{conj_intro_sympl}
$\displaystyle\sum_{\substack{|\lambda|=2m\\ i \text{ odd }\Rightarrow m_i(\lambda) \text{ even}}}\frac{1-q^{-\lambda'_1}}{q^{\frac{1}{2}\sum_i(\lambda'_i)^2+\frac{1}{2}o(\lambda)}\prod_i(1/q^2)_{\lfloor\frac{m_i(\lambda)}{2}\rfloor}}=
\frac{1}{q^m(q+1)}\sum_{i=1}^{m}\frac{(-1)^{i-1}(q^{2i+1}+1)}{q^{i(i+1)}(1/q^2)_{m-i}};
$
\item 
\label{conj_intro_ort_2m+1}
$\displaystyle \sum_{\substack{|\lambda|=2m+1\\i \text{ even}\Rightarrow \\m_i(\lambda) \text{ even}}}\frac{1-q^{-\lambda'_1}}{q^{\frac{1}{2}\sum_i (\lambda'_i)^2-\frac{1}{2}o(\lambda)}\prod_i(1/q^2)_{\lfloor\frac{m_i(\lambda)}{2}\rfloor}}=\frac{1}{q^m(1/q^2)_{m}}+\frac{1}{q^{m+1}}\sum_{i=0}^{m}\frac{(-1)^{i-1}}{q^{i(i+1)}(1/q^2)_{m-i}};$

\item 
\label{conj_intro_ort_2m}
$\displaystyle \sum_{\substack{|\lambda|=2m\\i \text{ even}\Rightarrow m_i(\lambda) \text{ even}}}\frac{1-q^{-\lambda'_1}}{q^{\frac{1}{2}\sum_i (\lambda'_i)^2-\frac{1}{2}o(\lambda)}\prod_i(1/q^2)_{\lfloor\frac{m_i(\lambda)}{2}\rfloor}}
=\frac{1}{q^m}\sum_{i=1}^{m}\frac{(-1)^{i-1}}{q^{i(i-1)}(1/q^2)_{m-i}}.$
\end{enumerate}
\end{thm}

We note that estimates for the proportions of derangements under consideration can be obtained by combining \cite[Proposition~1.1]{GurTiep03} with results in \cite{NP98} on the number of derangements for  classical groups, in their natural action on the set of non-zero vectors of the natural module. 

One of the main tools used to prove Theorems \ref{main} and \ref{p_main}, 
is the cycle index for the finite classical groups, introduced by Fulman in \cite{Fulman_cycle_index}. 
Cycle indices are generating functions that encode useful information for studying random matrices depending only on their conjugacy classes. They constitute a powerful tool for counting derangements and, notably, were a fundamental tool in Fulman and Guralnick's remarkable proof of the Boston-Shalev conjecture, which states the proportion of derangements in a transitive action of a simple group on a set $\Omega$, with $|\Omega|>1$, is uniformly bounded away from 0 (see \cite{FGsimple}, \cite{FGChev}, \cite{FGsub} and \cite{FGext}). We will recall the definitions of cycle index for each classical group at the beginning of the corresponding section.

This work builds on the approach developed for $\AGL_m(q)$ in \cite{SpigaAGL}, and we adopt the same notation to ensure consistency.

\begin{ackn}
I thank my PhD supervisor, Pablo Spiga, for his thoughtful guidance and constant support. I am also grateful to Fedor Petrov, for providing the proof of Lemma \ref{|A|=|B|}, to Tewodros Amdeberhan, for useful comments which contributed to the proof of Proposition \ref{prop_bar{u}}, and to Jason Fulman, for helpful discussions on the even characteristic case. I also thank the anonymous referee for valuable comments and suggestions, which have improved the clarity of the paper.

The author is a member of GNSAGA.
\end{ackn}
\section{Preliminaries}
\label{preliminaries}
\subsection*{Group theory preliminaries} 

Let $m$ be a positive integer and let $q=p^f$ be a prime power. Let $$\X_m(q) \in \{\U_m(q), \Sp_{m}(q), \Or^{\epsilon}_m(q)\}, \text{ }\epsilon \in \{+,-,\circ\}.$$ Moreover, let $V$ be the natural module for $\X_m(q)$, so that $V=\mathbb{F}_{q^e}^{m}$, where $e=2$ if $\X_m(q)=\U_m(q)$ and $e=1$ otherwise.
We consider the group $\AX_m(q)$, the semidirect product of the normal subgroup $V$ and $\X_m(q)$, and we refer to it as an \textit{affine unitary} (resp. \textit{symplectic, orthogonal}) \textit{group}. We collectively refer to these groups as \textit{affine classical groups}.
We denote elements of $\AX_m(q)$ as follows: for each matrix $a \in \X_m(q)$ and vector $v \in V$, we define the permutation $\varphi_{a,v}: V \rightarrow V$ by
$$\varphi_{a,v}: u \mapsto ua+v.$$

In the introduction, we defined $\delta_p(\AX_m(q))$ as the proportion of derangements of $\AX_m(q)$ of $p$-power order. Now, note that $\varphi_{a,v} \in \AX_m(q)$ is a $p$-element if and only if $a$ is a unipotent matrix. Hence, if we define
   $$ \mathcal{U}(\AX_m(q)) := \{\varphi_{a,v} \in \mathcal{D}(\AX_m(q)) \mid a \text{ unipotent}\},$$
then
\begin{equation}
\label{unip}
    \delta_p(\AX_m(q))= \frac{|\mathcal{U}(\AX_m(q))|}{|\AX_m(q)|}.
\end{equation}

We now state the following simple observation.
\begin{lemma}
\label{equiv}
Let $\varphi_{a,v} \in \AX_m(q)$. Then $\varphi_{a,v}$ is a derangement if and only if $v \notin \{u(a-1) \mid u \in V\}$. Moreover, $\varphi_{a,v}$ is a derangement of prime power order if and only if $a$ is a unipotent matrix and $v \notin \{u(a-1) \mid u \in V\}$.
\end{lemma}
\begin{proof}
Let $u \in V$. Then $\varphi_{a,v}$ fixes $u$ if and only if
$$u=u^{\varphi_{a,v}}=ua+v,$$
that is, $v \in \{u(a-1) \mid u \in V\}$. The two statements follow immediately.
\end{proof}

In view of Lemma \ref{equiv}, we have:
\begin{align}
    \delta(\AX_m(q))&= \frac{1}{|\AX_m(q)|}\sum_{a \in \X_m(q)}|\mathbb{F}^m_{q^e} \setminus\{u(a-1) \mid u \in \mathbb{F}^m_{q^e}\}|\nonumber  \\
    &= \frac{1}{|\AX_m(q)|}\sum_{a \in \X_m(q)}\left(q^{em}-q^{e\rank(a-1)}\right) \nonumber \\
    &= \frac{q^{em}}{|\AX_m(q)|}\sum_{a \in \X_m(q)}\left(1-q^{-e\dim \ker(a-1)}\right) \nonumber \\
    \label{prop_der}
    &=1 - \frac{1}{|\X_m(q)|}\sum_{a\in \X_m(q)}q^{-e\dim \ker(a-1)}.
\end{align}

Let $\Delta_{u}(\X_m(q))$ denote the proportion of unipotent elements of $\X_m(q)$. Applying again Lemma \ref{equiv}, an entirely similar computation yields:
\begin{align}
    \delta_p(\AX_m(q))&=\frac{1}{|\X_m(q)|}\sum_{\substack{a \in \X_m(q),\\ a \text{ unipotent}}}\left(1-q^{-e \dim \ker(a-1)}\right) \nonumber \\
    \label{prop_unip_der}
    &=\Delta_{u}(\X_m(q))-\frac{1}{|\X_m(q)|}\sum_{\substack{a\in \X_m(q),\\a \text{ unipotent}}}q^{-e\dim \ker(a-1)}.
\end{align}
In view of Equations \eqref{prop_der} and \eqref{prop_unip_der}, we will also be interested in the following quantities:
\begin{align} 
\delta'(\X_m(q))&=1-\delta(\AX_m(q))=\frac{1}{|\X_m(q)|}\sum_{a\in \X_m(q)}q^{-e\dim \ker(a-1)}, \label{d'(X_m(q))}\\
\label{d'_p(X_m(q))}
\delta'_p(\X_m(q))&=\Delta_u(\X_m(q))-\delta_p(\AX_m(q))=\frac{1}{|\X_m(q)|}\sum_{\substack{a\in \X_m(q),\\a \text{ unipotent}}}q^{-e\dim \ker(a-1)}.
\end{align}

Equations \eqref{prop_unip_der} and \eqref{d'_p(X_m(q))} involve the proportion of unipotent elements in $\X_m(q)$.
In \cite[Theorem~15.2]{Steinberg}, Steinberg proved that 
if $G$ is a connected reductive group and $\sigma$ is a bijective endomorphism of $G$ such that the pointwise stabiliser $G_{\sigma}$ is finite, then the number of unipotent elements (equivalently, of $p$-elements) of $G_{\sigma}$ equals the square of the order of a Sylow $p$-subgroup, where $p$ denotes the characteristic of the defining finite field. This determines the number of unipotent element in $\U_{m}(q)$ and $\Sp_{2m}(q)$. Instead, orthogonal groups are disconnected, so Steinberg's result is not directly applicable. Nevertheless, in odd characteristic, unipotent elements of $\Or^{\epsilon}_m(q)$ always lie in $\Omega^{\epsilon}_m(q)$, so Steinberg’s theorem does imply that the number of unipotent elements in $\Or^{\epsilon}_m(q)$, with $q$ odd, is the square of the order of a $p$-Sylow. Finally, the number of 2-elements in $\Or^{\pm}_{2m}(q)$ with $q$ even was computed by Fulman and Guralnick in \cite[Proposition~6.11]{FGChev} using the cycle index for orthogonal groups, and equals $q^{2m^2-2m+1}\left(1+\frac{1}{q}\mp \frac{1}{q^m}\right)$.
We collect the values of $\Delta_u(\X_m(q))$ in the following lemma.
\begin{lemma}
\label{steinberg}
The following formulas hold.
\begin{enumerate}
    \item \label{p_u}
    $\displaystyle \Delta_u(\U_m(q))=\frac{1}{q^m(-1/q)_m}$;
    \item \label{p_sp} $\displaystyle \Delta_u(\Sp_{2m}(q))=\frac{1}{q^m(1/q^2)_m}$;
    \item \label{p_o_2m1} $\displaystyle \Delta_u(\Or_{2m+1}(q))=\frac{1}{2q^m(1/q^2)_m}$, if $p$ is odd;
    \item \label{p_o2m} $\displaystyle \Delta_u(\Or^{\pm}_{2m}(q))=
    \begin{cases}
    \displaystyle\frac{q^{m^2-m}}{2(q^m \mp 1)\prod_{i=1}^{m-1}(q^{2i}-1)}, & \text{ if } p \text{ is odd};\\
    \displaystyle\frac{q^{(m-1)^2}(q^m+q^{m-1}\mp1)}{2(q^m \mp 1)\prod_{i=1}^{m-1}(q^{2i}-1)}, & \text{ if } p=2.
    \end{cases}$
\end{enumerate}
\end{lemma}

\subsection*{Partition theory preliminaries}
We conclude this preliminary section by collecting the notation on partitions that will be used throughout the paper. Let $\lambda=(\lambda_1,\dots,\lambda_m)$ be a partition of some non-negative integer $|\lambda|=\sum_{i=1}^{m}\lambda_i$ into $m$ parts $\lambda_1 \ge \dots \ge \lambda_m>0$. We write $\pt(\lambda)$ for the number of parts of $\lambda$, and $m_i(\lambda)$ for the number of parts of size $i$. Let $o(\lambda)$ denote the number of odd parts of $\lambda$, counted with their multiplicity. For each $i \ge 1$, let $\lambda'_i=\sum_{j\ge i}m_j(\lambda)$ be the number of parts of size at least $i$; the sequence $\lambda'=(\lambda'_i)_i$ defines a partition of $|\lambda|$, called the \textit{dual} to $\lambda$. Note, in particular, that $\lambda'_1=\pt(\lambda)$.
It is often convenient to represent partitions diagrammatically by means of the \textit{Ferrers diagram}: to each partition $\lambda=(\lambda_1,\dots,\lambda_m)$ we associate a graphical pattern of dots, in which the $i$-th row contains $\lambda_i$ dots; for example, the Ferrers diagram of the partition $(6,5,4,2,2)$ is represented in Figure \ref{fig:ex_ferrers}.
\begin{figure}
    \centering
    \includegraphics[width=4.5cm]{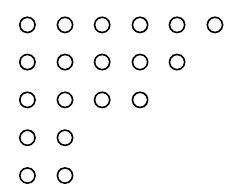}
    \caption{
Ferrers diagram of the partition $(6,5,4,2,2)$
}\label{fig:ex_ferrers}
\end{figure}
A partition $\lambda$ is said to have \textit{Durfee square} $s$ if $s$ is the largest number such that $\lambda'_s \ge s$, that is, $\lambda$ contains at least $s$ parts of size at least $s$. Graphically, the Durfee square corresponds to the side length of the largest square that fits inside the Ferrers diagram of $\lambda$.
For instance, the partition in Figure \ref{fig:ex_ferrers} has Durfee square 3.

Finally, since we will frequently work with generating functions, we remark the following. Let $\mathcal{S}$ be a set of partitions. Unless otherwise specified, by the \textit{generating function for (the number of partitions in)} $\mathcal{S}$ we mean the power series whose coefficient of $x^n$ counts the number of partitions in $\mathcal{S}$ of size $n$; that is 
$$\sum_{\lambda \in\mathcal{S}}x^{|\lambda|}.$$

\section{Affine unitary groups}
\label{sec_unitary}
The unitary group $\U_m(q)$ is the subgroup of $\GL_m(q^2)$ that preserves a non-degenerate unitary form on $\mathbb{F}_{q^2}$.
We begin this section by recalling the definition of cycle index for the finite unitary groups, following the exposition in \cite[Section~4.1]{Fulman_cycle_index}. To this end, we first review the structure of the conjugacy classes of $\U_m(q)$.

Given a polynomial $\phi \in \mathbb{F}_{q^2}[z]$, with non-zero constant term, define the polynomial $\widetilde{\phi}$ by
$$
\widetilde{\phi}=(\phi(0))^{-q}z^{\deg(\phi)}\phi^{q}(z^{-1}),
$$
where $\phi \mapsto \phi^q$ is the map which raises each coefficient of $\phi$ to the $q$th power.

The conjugacy class of an element $a \in \U_m(q)$ is uniquely determined by its rational canonical form and, in \cite{Wall}, Wall proved that it is encoded by the following combinatorial data: to each monic, non-constant, irreducible polynomial $\phi$ over $\mathbb{F}_{q^2}$, the element $a$ associates a partition $\lambda_{\phi}=\lambda_{\phi}(a)$ of a non-negative integer $|\lambda_{\phi}|$, determined by its rational canonical form. 
The collection $(\lambda_{\phi})_{\phi}$ represents a conjugacy class in $\U_m(q)$ if and only if the following conditions are satisfied:

\begin{enumerate}[label=(\roman*)]
    \item $|\lambda_z| = 0$,
    \item $\lambda_{\widetilde{\phi}}=\lambda_{\phi}$,
    \item {$\sum_{\phi} |\lambda_{\phi}| \deg(\phi)=m$}.
\end{enumerate}
From now on, all polynomials $\phi \in \mathbb{F}_{q^2}[z]$ will be assumed to be monic and irreducible.
The \textit{cycle index} $Z_{\U}$ for the unitary groups is the formal power series
$$Z_{\U} := 1 + \sum_{m=1}^{\infty} \frac{y^m}{|\U_m(q)|}\sum_{a \in \U_m(q)} \prod_{\phi \neq z}x_{\phi, \lambda_{\phi}(a)}. $$

Recall the definition of the function $(x)_j$ in Eq. \eqref{(x)_j} and define
\begin{equation*}
c_{\GL,q}(\lambda)\coloneqq q^{ \sum_{i}(\lambda'_i)^2}\prod_i(1/q)_{m_i(\lambda)}.
\end{equation*}
This notation is used to relate the cycle index for the unitary groups to the cycle index of the general linear groups (see \cite{Fulman_cycle_index}).

From \cite[Theorem 10]{Fulman_cycle_index}, we have the following useful factorisation for the cycle index of the unitary groups:
\begin{equation}
\label{fact_Z_u}
Z_{\U}= \prod_{\phi \neq z, \phi=\widetilde{\phi}} \left( \sum_{\lambda}x_{\phi,\lambda}\frac{(-y)^{|\lambda|\deg(\phi)}}{c_{\GL,-q^{\deg(\phi)}}(\lambda)}\right)\cdot \prod_{\substack{\{\phi, \widetilde{\phi}\} \\ \phi \neq \widetilde{\phi}}} \left( \sum_{\lambda}x_{\phi,\lambda}x_{\widetilde{\phi},\lambda}\frac{y^{2|\lambda|\deg(\phi)}}{c_{\GL,q^{2\deg(\phi)}}(\lambda)}\right). 
\end{equation}
In this section, for $m \ge 1$, let $d_m(q)$ and $u_m(q)$ denote respectively $\delta(\AU_m(q))$ and $\delta_p(\AU_m(q))$, and let $d'_m(q)$ and $u'_m(q)$ denote respectively $\delta'(\U_m(q))$ and $\delta'_p(\U_m(q))$. Moreover, we set $d'_0(q)=u'_0(q)\coloneqq 1$.

From Equations \eqref{d'(X_m(q))} and \eqref{d'_p(X_m(q))}, and Lemma \ref{steinberg}\eqref{p_u}, we have
\begin{align*}
d'_m(q)&=\frac{1}{|\U_m(q)|}\sum_{a \in \U_m(q)}q^{-2\dim \ker(a-1)}, \\
d_m(q)&=1-d'_m(q), \\
 u'_m(q)&=\frac{1}{|\U_m(q)|}\sum_{\substack{a \in \U_m(q), \\a \text{ unipotent}}}q^{-2\dim \ker(a-1)}, \\
 u_m(q)&=\frac{1}{q^m(-1/q)_m}-u'_m(q).
 \end{align*}
Finally, we define the generating functions
\begin{align}
D_{\U} \coloneqq \sum_{m=1}^{\infty}d_m(q)y^m, &&
U_{\U} \coloneqq \sum_{m=1}^{\infty}u_m(q)y^m, \\
\label{D'_U}
D'_{\U}\coloneqq\sum_{m=0}^{\infty}d'_m(q)y^m, && U'_{\U} \coloneqq\sum_{m=0}^{\infty}u'_m(q)y^m.
\end{align}

\subsection{Derangements of $p$-power order: proof of Theorem \ref{p_main}\eqref{p_Unitary}}
In this section, we prove Theorem \ref{p_main}\eqref{p_Unitary}, assuming Theorem \ref{gen_fun_U}, which will be established in Section \ref{Sec_gen_fun_U}.
Let $a \in \U_m(q)$ be unipotent, then the conjugacy class of $a$  corresponds to a partition $\lambda_{z-1}(a)=\lambda$ of size $m$.
 \begin{lemma}
 \label{lemma_unip_U}
     $$u_m(q)=(-1)^m\sum_{\substack{\lambda \\ |\lambda|=m}}\frac{1-q^{-2\lambda'_1}}{(-q)^{\sum_{i}(\lambda'_i)^2}\prod_i(-1/q)_{m_i(\lambda)}}.$$
\begin{proof}
The first equality in Eq. \eqref{prop_unip_der} gives
$$u_m(q)=\frac{1}{|\U_m(q)|}\sum_{\substack{a \in \U_m(q)\\a \text{ unipotent}}}\left(1-q^{-2\dim \ker(a-1)}\right).$$
Note that, if $a$ is unipotent and if the partition associated to $z-1$ is $\lambda$, then $\lambda_1^{'}=\dim \ker(a-1)$. Now, it follows from the definition of cycle index for the unitary groups  that $u_m(q)$ is the coefficient of $y^m$ in 
  $$\sum_{\lambda}x_{z-1,\lambda}\frac{(-y)^{|\lambda|}}{(-q)^{\sum_{i}(\lambda'_i)^2}\prod_i(-1/q)_{m_i(\lambda)}},$$ when we substitute all variables $x_{z-1,\lambda}$ with $1-q^{-2\lambda'_1}$.
\end{proof}
 \end{lemma}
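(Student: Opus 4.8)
The plan is to extract $u_m(q)$ directly from the cycle index factorisation in Eq.~\eqref{fact_Z_u}, exploiting the fact that a unipotent element concentrates all its combinatorial data on the single polynomial $\phi=z-1$. First I would observe that $z-1$ is fixed by the involution $\phi\mapsto\widetilde\phi$ (since $\widetilde{z-1}=z-1$ up to the normalising scalar), so it belongs to the first product in Eq.~\eqref{fact_Z_u}, and its local factor is
$$
\sum_{\lambda}x_{z-1,\lambda}\frac{(-y)^{|\lambda|}}{c_{\GL,z-1,-q}(\lambda)}
=\sum_{\lambda}x_{z-1,\lambda}\frac{(-y)^{|\lambda|}}{(-q)^{\sum_i(\lambda'_i)^2}\prod_i(-1/q)_{m_i(\lambda)}},
$$
using $\deg(z-1)=1$ and the definition of $c_{\GL,\phi,q}$. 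This is precisely the displayed series in the lemma.

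Next I would use the first equality of Eq.~\eqref{prop_unip_der}, namely
$$
u_m(q)=\frac{1}{|\U_m(q)|}\sum_{\substack{a\in\U_m(q)\\a\text{ unipotent}}}\bigl(1-q^{-2\dim\ker(a-1)}\bigr),
$$
and translate $\dim\ker(a-1)$ into partition language. Since a unipotent $a$ has $\lambda_{z-1}(a)=\lambda$ with $|\lambda|=m$ and trivial data elsewhere, and since $\ker(a-1)$ is exactly the fixed space of the unipotent action, its dimension equals the number of Jordan blocks, i.e.\ $\lambda'_1=\sum_j m_j(\lambda)$. Thus substituting $x_{z-1,\lambda}\mapsto 1-q^{-2\lambda'_1}$ in the local factor and reading off the coefficient of $y^m$ recovers $u_m(q)$, because the cycle index by construction weights each conjugacy class by its proportion $1/\prod_\phi B(q,\lambda_\phi)$ and here the only nontrivial factor is $B(q,\lambda)=(-1)^{|\lambda|}c_{\GL,z-1,-q}(\lambda)$.

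Reading off the coefficient of $y^m$ forces $|\lambda|=m$, which cancels the $(-y)^{|\lambda|}=(-1)^m y^m$ numerator into an overall $(-1)^m$, yielding
$$
u_m(q)=(-1)^m\sum_{\substack{\lambda\\|\lambda|=m}}\frac{1-q^{-2\lambda'_1}}{(-q)^{\sum_i(\lambda'_i)^2}\prod_i(-1/q)_{m_i(\lambda)}},
$$
as claimed. The only genuinely delicate point is the bookkeeping of signs: one must confirm that the $(-1)^{|\lambda|}$ in $B(q,\lambda)$ and the $(-y)^{|\lambda|}$ in the factorisation interact as stated, so that after extracting $y^m$ the surviving sign is $(-1)^m$ rather than a hidden cancellation. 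I expect this sign-tracking to be the main (though modest) obstacle; every other step is a direct unwinding of the definitions of the cycle index and of $\dim\ker(a-1)=\lambda'_1$.
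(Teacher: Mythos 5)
Your proposal is correct and follows essentially the same route as the paper: both extract $u_m(q)$ from the $z-1$ local factor of the cycle index factorisation in Eq.~\eqref{fact_Z_u}, identify $\dim\ker(a-1)$ with $\lambda'_1$, substitute $x_{z-1,\lambda}\mapsto 1-q^{-2\lambda'_1}$, and read off the coefficient of $y^m$, with the factor $(-y)^{|\lambda|}$ producing the overall sign $(-1)^m$. Your extra care with the sign bookkeeping and with checking that $z-1$ is self-conjugate under $\phi\mapsto\widetilde\phi$ is sound but simply makes explicit what the paper leaves implicit.
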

Let us define 
\begin{equation}
\label{H}
    H(x)\coloneqq\sum_{|\lambda|=m}\frac{(1-x^{2\lambda'_1})x^{\sum_i(\lambda'_i)^2}}{\prod_i(x)_{m_i(\lambda)}},
\end{equation}
so that 
\begin{equation}
\label{u_m(q)=(-1)^mH(-1/q)}
u_m(q)=(-1)^mH(-1/q).
 \end{equation}
Our strategy to proving Theorem \ref{p_main}\eqref{p_Unitary} is to suitably rewrite $H(x)$. To this end, we first require the following result.
 \begin{thm}[{\cite[Lemma 4.2 and Theorem 1.3]{SpigaAGL}}]
 \label{thmG}
Let $m \in \mathbb{N}$, and let $\Delta_m$ be the set of partitions $\lambda=(\lambda_1,\dots,\lambda_m)$ into $m$  parts such that $\lambda_k=k$ for some $k \in \{1,\dots,m\}$. Let $$G(x)\coloneqq\sum_{\lambda \in \Delta_m}x^{|\lambda|}$$ denote the generating function for the number of partitions in $\Delta_m$.

Then $G(x)$ admits the following equivalent expressions:
$$G(x)=\sum_{|\lambda|=m}\frac{(1-x^{\lambda'_1})x^{\sum_i(\lambda'_i)^2}}{\prod_i(x)_{m_i(\lambda)}}=\sum_{j=0}^{m-1}\frac{x^{(m-j)^2+j}(x)_{m-1}}{(x)^2_{m-j-1}(x)_j}=x^m\sum_{i=1}^{m}\frac{(-1)^{i-1}x^{i(i-1)/2}}{(x)_{m-i}}.$$
 \end{thm}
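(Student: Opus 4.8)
The plan is to prove the identity in two stages: first a combinatorial/generating-function stage that produces a Durfee-square ``sum over $s$'' expression, and then a purely algebraic $q$-series identity that rewrites this sum in the claimed closed form. This mirrors the structure of Theorem~\ref{thmG}, whose middle (Durfee) and right (alternating-sum) expressions play exactly the roles below.

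\textbf{Step 1 (combinatorial reduction via the Durfee square).} Write $d_k := \lambda_k - k$; since $\lambda_{k+1}\le\lambda_k$ one has $d_{k+1}\le d_k-1$, so $(d_k)_k$ is strictly decreasing and there is at most one $k$ with $\lambda_k=k$. Comparing with the Durfee square $s$ (for which $d_s\ge 0>d_{s+1}$), such a $k$, when it exists, must equal $s$. Hence the condition defining our partitions is equivalent to $\lambda_s=s$ together with either $s=1$, forcing $\lambda=(1,\dots,1)$, or $s\ge 2$ and $\lambda_{s-1}>s$. I would then decompose such a $\lambda$ along its $s\times s$ Durfee square into three independent pieces: the square itself (generating function $x^{s^2}$); the ``arm'' $\alpha_i=\lambda_i-s$ for $i\le s$, where the constraint $\lambda_{s-1}>s=\lambda_s$ says precisely that $\alpha$ has exactly $s-1$ positive parts (generating function $x^{s-1}/(x)_{s-1}$); and the ``leg'' $(\lambda_{s+1},\dots,\lambda_m)$, consisting of exactly $m-s$ parts each in $\{1,\dots,s\}$ (generating function $x^{m-s}\binom{m-1}{s-1}_x$, obtained by subtracting $1$ from each leg part and counting partitions in an $(m-s)\times(s-1)$ box). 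Multiplying and summing over $s$ yields
\[
F(x)=\sum_{s=1}^{m} x^{s^2+m-1}\,\frac{(x)_{m-1}}{(x)_{s-1}^2\,(x)_{m-s}},
\]
where $F(x)$ denotes our generating function. This Durfee term is exactly $x^{s-1}$ times the $s$-th Durfee term of $G(x)$ in Theorem~\ref{thmG}; the whole difficulty is that the weight $x^{s-1}$ lives on the Durfee index $s$, whereas the target closed form is indexed by the (a priori unrelated) inclusion--exclusion index $i$.

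\textbf{Step 2 (the $q$-identity).} Using $(-1)^i(x^i-1)/(1-x)=(-1)^{i-1}[i]_x$ with $[i]_x:=1+x+\dots+x^{i-1}$, the claimed closed form equals $x^m\sum_{i=1}^m(-1)^{i-1}[i]_x\,x^{i(i-1)/2}/(x)_{m-i}$. After cancelling $x^m$ and substituting $s=t+1$, Step~1 reduces the theorem to the terminating $q$-series identity
\[
\sum_{t=0}^{m-1}\frac{x^{t^2+2t}}{(x)_t}\binom{m-1}{t}_x=\sum_{i=1}^{m}\frac{(-1)^{i-1}[i]_x\,x^{i(i-1)/2}}{(x)_{m-i}}.
\]
This is the crux and the step I expect to be the main obstacle. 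I would prove it by induction on $m$: the $q$-Pascal rule $\binom{m-1}{t}_x=\binom{m-2}{t}_x+x^{m-1-t}\binom{m-2}{t-1}_x$ splits the left sum into a copy of the $(m-1)$-case plus an explicit correction term, while on the right the telescoping $1/(x)_{m-i}-1/(x)_{m-1-i}=x^{m-i}/(x)_{m-i}$ produces a matching correction. Since equating the two corrections is itself an identity of the same shape, I would carry the induction with a one-parameter refinement (tracking $\sum_t x^{t^2+2t}z^t\binom{m-1}{t}_x/(x)_t$) so that the inductive hypothesis closes.

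\textbf{Remarks on the obstacle.} Alternatively, the identity is of balanced $q$-hypergeometric type and can be derived from the finite $q$-binomial theorem together with a $q$-Chu--Vandermonde summation, or verified algorithmically; I have checked it directly for $m\le 3$. In any case it is the exact analogue, carrying the extra factor $[i]_x$, of the middle$=$right equality established in Theorem~\ref{thmG}, so the argument used there should transfer with the weight inserted. Step~1 is routine once the Durfee-square dictionary above is set up; the genuine content, and the point most likely to require care, is the purely algebraic passage from the Durfee sum to the signed sum over $i$.
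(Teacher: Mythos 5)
Your Step 1 contains a genuine error that derails the whole argument. After correctly locating the unique possible fixed point at the Durfee index $s$, you assert that $\lambda_s=s$ forces $\lambda_{s-1}>s$ when $s\ge 2$. This is false: take $\lambda=(2,2)$, which satisfies $\lambda_2=2=s$ and so belongs to the set counted by $G(x)$, yet $\lambda_1=2\not>s$. The condition $\lambda_s=s$ only says that the arm $\alpha=(\lambda_1-s,\dots,\lambda_s-s)$ has \emph{at most} $s-1$ positive parts (generating function $1/(x)_{s-1}$, giving the Durfee term exponent $s^2+m-s$), whereas you imposed \emph{exactly} $s-1$ positive parts (generating function $x^{s-1}/(x)_{s-1}$). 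That extra requirement $\lambda_{s-1}>\lambda_s=s$ is precisely this paper's definition of a \emph{cute} partition, and your
$F(x)=\sum_{s=1}^{m}x^{s^2+m-1}(x)_{m-1}/\bigl((x)_{s-1}^2(x)_{m-s}\bigr)$
is, after the substitution $j=m-s$, exactly the function $K(x)$ of Lemma \ref{lemmaH=G+xK} --- the generating function for cute partitions --- and not $G(x)$. Concretely, for $m=2$ one has $G(x)=x^2+x^4/(1-x)$, while your $F(x)=x^2+x^5/(1-x)$; they already differ at $(2,2)$. Consistently with this slip, the ``claimed closed form'' in your Step 2, $x^m\sum_{i=1}^m(-1)^{i-1}[i]_x\,x^{i(i-1)/2}/(x)_{m-i}$, is not the right-hand side of Theorem \ref{thmG} (which carries no factor $[i]_x$) but the closed form of Theorem \ref{gen_fun_U}: the factor $(-1)^i(x^i-1)/(1-x)$ you manipulate comes from that other statement. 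So your two steps are mutually consistent, but they aim at a different theorem from the one you were asked to prove.

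Even setting the mix-up aside, the proposal would be incomplete in two further respects. First, the crux $q$-identity of Step 2 is only a plan: the induction is left with an unspecified ``one-parameter refinement,'' and the alternatives ($q$-Chu--Vandermonde, algorithmic verification, checking $m\le 3$) are not a proof. In the source, the analogous middle$=$right equality is established via Jacobi's triple product identity together with a sign-reversing cancellation of choices --- the same machinery this paper deploys to prove Theorem \ref{gen_fun_U}, where the cute variant additionally requires the bijection of Lemma \ref{|A|=|B|} --- so you should expect this step to need substantially more than $q$-Pascal bookkeeping. Second, you never address the first displayed expression, the sum over partitions of $m$ weighted by $(1-x^{\lambda'_1})x^{\sum_i(\lambda'_i)^2}/\prod_i(x)_{m_i(\lambda)}$; its equality with the Durfee sum follows from the computation in [\cite{SpigaAGL}, Lemma 4.2] (recalled in the proof of Lemma \ref{lemmaH=G+xK}) together with $1/(x)_{m-j-1}^2=(1-x^{m-j})/\bigl((x)_{m-j-1}(x)_{m-j}\bigr)$, but a proof of the stated theorem must include it. To repair Step 1, drop the requirement $\lambda_{s-1}>s$: the arm then contributes $1/(x)_{s-1}$, the Durfee sum becomes $\sum_{s=1}^m x^{s^2+m-s}(x)_{m-1}/\bigl((x)_{s-1}^2(x)_{m-s}\bigr)$, which is the middle expression of Theorem \ref{thmG}, and the remaining task is the identity $\sum_{t=0}^{m-1}x^{t^2+t}\binom{m-1}{t}_x/(x)_t=\sum_{i=1}^{m}(-1)^{i-1}x^{i(i-1)/2}/(x)_{m-i}$, without the $[i]_x$ weight.
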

 
The following lemma provides a useful interpretation of $H(x)$, which will be fundamental to prove Theorem \ref{gen_fun_U}.
 \begin{lemma} 
 \label{lemmaH=G+xK}
 Let $m \in \mathbb{N}$ and let $\Lambda_m$ be the set of partitions defined in Eq. \eqref{Lambda_m}.
Let $H(x)$ be the function defined in Eq. \eqref{H} and let $G(x)$ be as in Theorem $\ref{thmG}$. Let
$$K(x)\coloneqq \sum_{j=0}^{m-1}\frac{x^{(m-j)^2+m-1}(x)_{m-1}}{(x)_{m-j-1}^2(x)_j}.$$
Then
$H(x)=G(x)+xK(x)$, and
$$K(x)=\sum_{\lambda \in \Lambda_m}x^{|\lambda|}$$ coincides with the generating function for the number of partitions in $\Lambda_m$.

\begin{proof}
We have
\begin{equation*}
H(x)=\sum_{|\lambda|=m}\frac{(1-x^{2\lambda'_1})x^{\sum_i(\lambda'_i)^2}}{\prod_i(x)_{m_i(\lambda)}}=\sum_{j=0}^{m-1}\left(1-x^{2(m-j)}\right)\sum_{\substack{|\lambda|=m \\ \lambda'_1=m-j}}\frac{x^{\sum_i(\lambda'_i)^2}}{\prod_i(x)_{m_i(\lambda)}}.
\end{equation*}
Now, from the proof of \cite[Lemma~4.2]{SpigaAGL}, we have
\begin{equation*}
\sum_{\substack{|\lambda|=m \\ \lambda'_1=m-j}}\frac{x^{\sum_i(\lambda'_i)^2}}{\prod_i(x)_{m_i(\lambda)}}=\frac{x^{(m-j)^2+j}(x)_{m-1}}{(x)_{m-j-1}(x)_{m-j}(x)_j}.
\end{equation*}
Therefore, 
\begin{align*}
H(x)&=\sum_{j=0}^{m-1}(1+x^{m-j})(1-x^{m-j})\frac{x^{(m-j)^2+j}(x)_{m-1}}{(x)_{m-j-1}(x)_{m-j}(x)_j} \\
&=\sum_{j=0}^{m-1}(1+x^{m-j})\frac{x^{(m-j)^2+j}(x)_{m-1}}{(x)_{m-j-1}^2(x)_j}\\
&=\sum_{j=0}^{m-1}\frac{x^{(m-j)^2+j}(x)_{m-1}}{(x)_{m-j-1}^2(x)_j}+\sum_{j=0}^{m-1}\frac{x^{(m-j)^2+m}(x)_{m-1}}{(x)_{m-j-1}^2(x)_j} \\
&=G(x)+x\sum_{j=0}^{m-1}\frac{x^{(m-j)^2+m-1}(x)_{m-1}}{(x)_{m-j-1}^2(x)_j}=G(x)+xK(x),
\end{align*}
where the last two equalities follow from Theorem \ref{thmG} and the definition of $K(x)$. This proves the first part of the lemma.

\begin{figure}
\floatbox[{\capbeside\thisfloatsetup{capbesideposition={right,bottom},capbesidewidth=10 cm}}]{figure}[\FBwidth]
{\caption{\textbf{Example:} 
$\lambda=(8,7,7,4,4,3,3,1,1)$ has Durfee square 4, $\pi_1(\lambda)=(4,3,3)$, $\pi_2(\lambda)=(4,3,3,1,1)$ and it satisfies $\lambda_3>\lambda_4=4$.
}\label{fig:example_partition_lemma}}
{\includegraphics[]{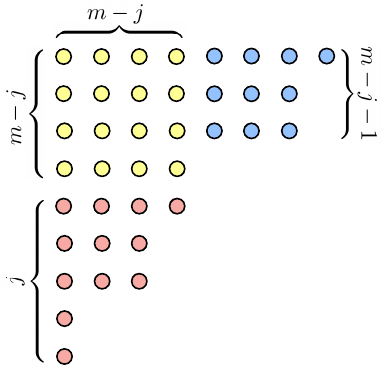}}
\end{figure}

To prove that $K(x)=\sum_{\lambda \in \Lambda_m}x^{|\lambda|}$, let us look at the summands of $K(x)$, which we rewrite as:
$$\frac{x^{(m-j)^2+m-1}(x)_{m-1}}{(x)_{m-j-1}^2(x)_j}=x^{(m-j)^2}\cdot \frac{x^j(x)_{m-1}}{(x)_{m-j-1}(x)_j}\cdot\frac{x^{m-j-1}}{(x)_{m-j-1}}.$$
From \cite[Theorem~3.1]{Andrews},
$$\frac{x^j(x)_{m-1}}{(x)_{m-j-1}(x)_j}$$
is the generating function for partitions $\pi_1$ into exactly $j$ parts, all having size at most $m-j$, and 
$$\frac{x^{m-j-1}}{(x)_{m-j-1}}$$
is the generating function for partitions $\pi_2$ with $m-j-1$ parts.
Therefore, since every partition $\lambda$ of $N$ is uniquely determined by the pair $(\pi_1(\lambda),\pi_2(\lambda))$, where $|\pi_1(\lambda)|+|\pi_2(\lambda)|=N-(m-j)^2$, $\pi_1(\lambda)$ is the partition below the Durfee square of $\lambda$ (which consists of exactly 
$j$ parts, each of size at most $m-j$), and $\pi_2(\lambda)$ is the partition to the right of its Durfee square (which has at most $m-j$ parts) (see Fig. \ref{fig:example_partition_lemma}),

$$x^{(m-j)^2}\cdot \frac{x^j(x)_{m-1}}{(x)_{m-j-1}(x)_j}\cdot\frac{x^{m-j-1}}{(x)_{m-j-1}}$$
is the generating function for partitions into $m$ parts, with Durfee square $m-j$, and with partition to the right of the Durfee square having exactly $m-j-1$ parts.

Therefore, $K(x)$ is the generating function for  partitions into $m$ parts, with Durfee square $m-j$ for some $j \in \{0,\dots,m-1\}$, and with the partition to the right of the Durfee square having exactly $m-j-1$ parts. Note that (see also Fig. \ref{fig:example_partition_lemma}) such partitions are exactly the partitions $\lambda=(\lambda_1,\dots,\lambda_m)$ into $m$ parts, such that either $\lambda_1=1$, or $\lambda_{k-1}>\lambda_k=k$ for some  $k \in \{2,\dots,m\}$, that is, the partitions in $\Lambda_m$. This concludes the proof.
\end{proof}
 \end{lemma}

We now prove Theorem \ref{p_main}\eqref{p_Unitary}, assuming Theorem \ref{gen_fun_U}, which will be established in Section \ref{Sec_gen_fun_U}.
\begin{proof}[Proof of Theorem $\ref{p_main}\eqref{p_Unitary}$]
From Lemma \ref{lemma_unip_U} we have
 $$u_m(q)=(-1)^m\sum_{|\lambda|=m}\frac{1-q^{-2\lambda'_1}}{(-q)^{\sum_{i}(\lambda'_i)^2}\prod_i(-1/q)_{m_i(\lambda)},}$$
 and from Eq. \eqref{u_m(q)=(-1)^mH(-1/q)} and Lemma \ref{lemmaH=G+xK} we have $u_m(q)=(-1)^mH(-1/q)$, where $H(x)=G(x)+xK(x)$. Combining Theorems \ref{gen_fun_U} and \ref{thmG}, we get
 \begin{align*}
 H(x)&=x^m\sum_{i=1}^{m}\frac{(-1)^{i-1}x^{i(i-1)/2}}{(x)_{m-i}}+
 \frac{x^{m+1}}{1-x}\sum_{i=1}^{m}\frac{(-1)^{i}x^{i(i-1)/2}(x^i-1)}{(x)_{m-i}} \\
 &=\frac{x^{m+1}}{1-x}\sum_{i=1}^{m}\frac{(-1)^{i-1}x^{i(i-1)/2}(x^{-1}-1)+(-1)^{i}x^{i(i+1)/2}+(-1)^{i-1}x^{i(i-1)/2}}{(x)_{m-i}} \\
 &=\frac{x^{m+1}}{1-x}\sum_{i=1}^m\frac{(-1)^{i}x^{i(i+1)/2}(1-x^{-(i+1)})}{(x)_{m-i}}.
 \end{align*}
 and therefore,
 \begin{equation*}
 u_m(q)=(-1)^mH(-1/q)=\frac{1}{q^m(q+1)}\sum_{i=1}^m\frac{(-1)^i((-q)^{i+1}-1)}{(-q)^{i(i+1)/2}(-1/q)_{m-i}}. \qedhere
  \end{equation*}
\end{proof} 
\subsection{Derangements: proof of Theorem \ref{main}\eqref{Unitary}}
Let us define
$$T_{\U}:=\sum_{m=0}^{\infty}\frac{y^m}{q^m(-1/q)_m}.$$

Recall the definitions of $U'_{\U}$ and $D'_{\U}$ in Eq. \eqref{D'_U}.

\begin{lemma}
\label{fact_D'_u}
We have
$$D'_{\U}=T^{-1}_{\U}(1-y)^{-1}U'_{\U}.$$
\begin{proof}
Recall that for a unipotent element $a \in \U_m(q)$, the partition $\lambda$ associated with $z-1$ satisfies $\lambda'_1=\dim \ker(a-1)$. It follows from the definition of the cycle index $Z_{\U}$ 
 that $d'_m(q)$ is the coefficient of $y^m$ in $Z_{\U}$ when we assign the variables $x_{\phi,\lambda}$ and $x_{\widetilde{\phi},\lambda}$ the value 1 when $\phi \neq z-1$, and set $x_{z-1,\lambda}=q^{-2\lambda'_1}.$
 Moreover, observe that if all variables $x_{\phi,\lambda}$ and $x_{\widetilde{\phi},\lambda}$ are set to $1$ in $Z_{\U}$, then we obtain $(1-y)^{-1}$. Similarly, note that $u'_m(q)$ is the coefficient of $y^m$ in the factor 
  $$\sum_{\lambda}x_{z-1,\lambda}\frac{(-y)^{|\lambda|}}{(-q)^{\sum_{i}(\lambda'_i)^2}\prod_i(-1/q)_{m_i(\lambda)}}$$
of $Z_{\U}$, when we set the variables $x_{z-1,\lambda}$ equal to $q^{-2\lambda'_1}$. If we instead set these variables $x_{z-1,\lambda}$ equal to 1,
the coefficient of $y^m$ in $\sum_{\lambda}\frac{(-y)^{|\lambda|}}{(-q)^{\sum_{i}(\lambda'_i)^2}\prod_i(-1/q)_{m_i(\lambda)}}$ is equal to the proportion of unipotent elements in $\U_m(q)$, which, by Lemma \ref{steinberg}, is $\frac{1}{q^m(-1/q)_m}$.
Therefore, 
 $$\sum_{\lambda}\frac{(-y)^{|\lambda|}}{(-q)^{\sum_{i}(\lambda'_i)^2}\prod_i(-1/q)_{m_i(\lambda)}}=
 \sum_{m=0}^{\infty}\frac{y^m}{q^m(-1/q)_m}=T_{\U}.$$
Using the factorisation of $Z_{\U}$ in Eq. \eqref{fact_Z_u}, these observations together imply that
 \begin{equation*}
 (1-y)^{-1}=D'_{\U}\cdot U'^{-1}_{\U} \cdot T_{\U}. \qedhere 
\end{equation*}
\end{proof}
\end{lemma}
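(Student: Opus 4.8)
The plan is to exploit the product factorisation of the cycle index $Z_{\U}$ recorded in Eq. \eqref{fact_Z_u}, reading off $D'_{\U}$, $U'_{\U}$ and $T_{\U}$ as three specialisations of one and the same factor. First I would isolate, among all the factors indexed by irreducible polynomials $\phi$, the one attached to $\phi=z-1$. Since $z-1$ is fixed by the involution $\phi\mapsto\widetilde{\phi}$ (one checks $\widetilde{z-1}=z-1$ directly from the definition, in both odd and even characteristic), this factor comes from the first product in \eqref{fact_Z_u} and equals
$$
F:=\sum_{\lambda}x_{z-1,\lambda}\frac{(-y)^{|\lambda|}}{(-q)^{\sum_i(\lambda'_i)^2}\prod_i(-1/q)_{m_i(\lambda)}},
$$
so that $Z_{\U}=F\cdot P$, where $P$ is the product of all the remaining factors; crucially, $P$ involves none of the variables $x_{z-1,\lambda}$.

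Next I would record the relevant specialisations. Setting $x_{\phi,\lambda}=1$ for every $\phi$ and $\lambda$ collapses the defining sum of $Z_{\U}$ to $\sum_{m\ge 0}y^m=(1-y)^{-1}$, because the class proportions over each $\U_m(q)$ sum to $1$; under the same substitution $F$ becomes $\sum_{\lambda}\frac{(-y)^{|\lambda|}}{(-q)^{\sum_i(\lambda'_i)^2}\prod_i(-1/q)_{m_i(\lambda)}}$, whose coefficient of $y^m$ is the proportion of unipotent elements $\Delta_u(\U_m(q))=\frac{1}{q^m(-1/q)_m}$ of Lemma \ref{steinberg}.i, so that $F$ specialises to $T_{\U}$. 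On the other hand, for any $a\in\U_m(q)$ the quantity $\dim\ker(a-1)$ counts the Jordan blocks of $a$ with eigenvalue $1$, hence equals the number of parts $\lambda'_1$ of the partition attached to $z-1$ (the other polynomials contribute nothing to this kernel); therefore setting $x_{z-1,\lambda}=q^{-2\lambda'_1}$ and all remaining variables to $1$ makes $Z_{\U}$ specialise to $D'_{\U}$ and $F$ specialise to $U'_{\U}$.

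Finally I would eliminate the unknown factor $P$. As $P$ does not depend on the variables $x_{z-1,\lambda}$, it takes the same value $P_0$ under both of the above substitutions, and $Z_{\U}=F\cdot P$ yields $(1-y)^{-1}=T_{\U}\,P_0$ and $D'_{\U}=U'_{\U}\,P_0$. Solving the first for $P_0=T^{-1}_{\U}(1-y)^{-1}$ (legitimate since $T_{\U}$ is an invertible power series with constant term $1$) and substituting into the second gives $D'_{\U}=T^{-1}_{\U}(1-y)^{-1}U'_{\U}$, equivalently $(1-y)^{-1}=D'_{\U}\,U'^{-1}_{\U}\,T_{\U}$, as claimed. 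The one genuinely delicate point is the bookkeeping in the middle step: I must verify that $z-1=\widetilde{z-1}$ so that the correct factor $F$ is singled out, that the all-ones specialisation of $Z_{\U}$ really collapses to $(1-y)^{-1}$, and that $q^{-2\dim\ker(a-1)}$ depends only on $\lambda_{z-1}(a)$ so that it is faithfully encoded by the single substitution $x_{z-1,\lambda}=q^{-2\lambda'_1}$; once these identifications are in place, everything else is formal power-series manipulation.
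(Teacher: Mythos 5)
Your proposal is correct and follows essentially the same route as the paper: both arguments single out the factor of the factorised cycle index $Z_{\U}$ attached to $\phi=z-1$, compare the two specialisations $x_{z-1,\lambda}=q^{-2\lambda'_1}$ (yielding $D'_{\U}$ and $U'_{\U}$) and $x_{z-1,\lambda}=1$ (yielding $(1-y)^{-1}$ and $T_{\U}$), and cancel the common remaining factor. Your write-up is in fact slightly more careful than the paper's on two points worth keeping: making explicit that the complementary factor $P$ is independent of the $x_{z-1,\lambda}$ variables (which is exactly what justifies the cancellation), and observing that $\dim\ker(a-1)=\lambda'_1$ holds for \emph{all} $a\in\U_m(q)$, not just unipotent ones, which is what the specialisation defining $D'_{\U}$ really requires.
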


We are ready to prove Theorem \ref{main}\eqref{Unitary} (assuming Theorem \ref{gen_fun_U}).
\begin{proof}[Proof of Theorem  $\ref{main}\eqref{Unitary}$]
Applying Theorem \ref{p_main}\eqref{p_Unitary}, we first obtain:
\begin{align*}
u'_m(q)& =\frac{1}{q^m(-1/q)_m}-u_m(q) \\
& =\frac{1}{q^m(-1/q)_m}-\frac{1}{q^m(q+1)}\sum_{i=1}^m\frac{(-1)^{i}((-q)^{i+1}-1)}{(-q)^{i(i+1)/2}(-1/q)_{m-i}}\\
& = \frac{1}{q^m(q+1)}\sum_{i=0}^{m}\frac{(-1)^{i}(1-(-q)^{i+1})}{(-q)^{i(i+1)/2}(-1/q)_{m-i}}.
\end{align*}
We now observe that the following factorisation holds for $U'_{\U}$:
\begin{align}
    \nonumber
U'_{\U}=\sum_{m=0}^{\infty}u'_m(q)y^m&=\sum_{m=0}^{\infty}\frac{1}{q+1}\sum_{i=0}^{m}\frac{(-1)^i(1-(-q)^{i+1})}{(-q)^{m-i}(-q)^{i(i+3)/2}(-1/q)_{m-i}}(-y)^m\\
\nonumber
    &=\sum_{m=0}^{\infty}\frac{1}{q+1}\sum_{i=0}^{m}\frac{(-1)^i(1-(-q)^{i+1})}{(-q)^{i(i+3)/2}}(-y)^i\cdot\frac{1}{(-q)^{m-i}(-1/q)_{m-i}}(-y)^{m-i}\\
    \label{fattorizz}
    &=\overline{D}_{\U}\cdot T_{ \U},
\end{align}
where $\overline{D}_{\U}$ is defined as
$$\overline{D}_{\U}:=\frac{1}{q+1}\sum_{i=0}^{\infty}\frac{1-(-q)^{i+1}}{(-q)^{i(i+3)/2}}y^i.$$
By Eq. \eqref{fattorizz} and Lemma \ref{fact_D'_u}, we have 
\begin{align*}
D'_{\U}&=T^{-1}_{\U}\cdot U'_{\U} \cdot (1-y)^{-1}= \overline{D}_{\U} \cdot (1-y)^{-1} \\
&=\frac{1}{q+1}\sum_{i=0}^{\infty}\frac{1-(-q)^{i+1}}{(-q)^{i(i+3)/2}}y^i\cdot \sum_{j=0}^{\infty}y^j=\sum_{m=0}^{\infty}\frac{1}{q+1}\sum_{i=0}^{m}\frac{1-(-q)^{i+1}}{(-q)^{i(i+3)/2}}y^m.
\end{align*}
Hence,
$$d'_m(q)=\frac{1}{q+1}\sum_{i=0}^{m}\frac{1-(-q)^{i+1}}{(-q)^{i(i+3)/2}}$$
and
$$d_m(q)=1-d'_m(q)=\frac{1}{q+1}\sum_{i=1}^{m}\frac{(-q)^{i+1}-1}{(-q)^{i(i+3)/2}}=\frac{1}{q+1}\left(1-\frac{1}{(-q)^{m(m+3)/2}}\right),$$
where the last equality can be easily verified by induction on $m$.
\end{proof}

\subsection{Proof of Theorem \ref{gen_fun_U}}
\label{Sec_gen_fun_U}
Let $m\ge1$. In the introduction (see Eq. \eqref{Lambda_m}), we defined $\Lambda_m$ as the set of partitions 
$\lambda=(\lambda_1,\dots,\lambda_m)$ 
 such that either $\lambda_1=1$ or $\lambda_{k-1}>\lambda_k=k$, for some $k \in \{2,\dots,m\}$.
 Considering any number $m$ of parts for such partitions, we define
 \begin{equation*}
\Lambda\coloneqq \bigcup_{m\ge1}\Lambda_m.
 \end{equation*}
 For example, the partitions $(6,5,3,3,2)$, $(9,6,5,4,3,1,1)$ and $(1,1,1)$ belong to $\Lambda$, and all the partitions in $\Lambda_1,$ $\Lambda_2$ and $\Lambda_3$ are listed in the first column of Table \ref{exgenfun}.

As observed in the proof of Lemma \ref{lemmaH=G+xK}, $\lambda \in \Lambda_m$ if and only if it has Durfee square $k$ (for some $k \in \{1,\dots,m\}$)  and the partition to the right of the Durfee square has exactly $k-1$ parts (see also Fig. \ref{fig:example_partition_lemma}). Partitions satisfying the condition $\lambda_k=k$ for some $k$ were studied by Spiga in \cite{SpigaAGL}, in relation to the proportion of derangements in $\AGL_m(q)$. In \cite{BlechKnopf}, Blecher and Knopfmacher refer to the condition $\lambda_k=k$ as a \textit{fixed point}, extending the concept of fixed points from permutation group theory to integer partitions. In addition to the fixed point condition, the partitions in $\Lambda$ impose the extra requirement that $\lambda_{k-1}>k$, which can be rephrased by saying that the conjugate of the partition to the right of the Durfee square must have first part equal to $k-1$ (see also \cite{HopkSell} and \cite{Hopk}).

Theorem \ref{gen_fun_U} states that the generating function for the number of partitions in $\Lambda_m$ is
$$\frac{x^m}{1-x}\sum_{i=1}^{m}\frac{(-1)^{i}x^{i(i-1)/2}(x^i-1)}{(x)_{m-i}}.$$

To prove Theorem  \ref{gen_fun_U}, we first establish the following auxiliary lemma. The author would like to thank Fedor Petrov for providing the proof of this lemma, in response to a question posed on MathOverflow, see \cite{Fedor}.
\begin{lemma}
\label{|A|=|B|}
Let $a$ and $b$ be non-negative integers. The following two sets have the same cardinality:
\begin{align*}
\mathcal{A}(a,b)\coloneqq&\{(\lambda,\mu) \mid \lambda,\mu \textup{ partitions}, \pt(\lambda)=b, |\lambda|+|\mu|=a, \lambda_1=\mu_1+1\},\\
\mathcal{B}(a,b)\coloneqq&\{(\lambda,\mu) \mid \lambda,\mu \textup{ partitions}, \pt(\lambda)=b, |\lambda|+|\mu|=a, \lambda \in \Lambda, \mu_1=\mu_2 \},
\end{align*}
where in the set $\mathcal{B}(a,b)$,  $\mu$ can be the empty partition.
\end{lemma}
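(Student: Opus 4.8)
The plan is to fix $b$ and prove the equality of the two generating functions $A_b(x):=\sum_{a\ge0}|\mathcal A(a,b)|x^a$ and $B_b(x):=\sum_{a\ge0}|\mathcal B(a,b)|x^a$, organising both sums according to the size $s$ of the Durfee square of the first component $\lambda$. This is natural because in both $\mathcal A(a,b)$ and $\mathcal B(a,b)$ the partition $\lambda$ has exactly $b$ parts, so its Durfee square has some size $s\in\{1,\dots,b\}$, and $\lambda$ splits into the $s\times s$ square (contributing $x^{s^2}$), the partition $\pi_1$ below it (having \emph{exactly} $b-s$ parts, each of size at most $s$) and the partition $\pi_2$ to its right (occupying rows $1,\dots,s$, hence at most $s$ parts). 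By the characterisation of cuteness recalled before Lemma \ref{|A|=|B|}, the extra condition defining $\mathcal B$ is precisely that $\pi_2$ has \emph{exactly} $s-1$ parts; the below-square part $\pi_1$ is unconstrained in both cases.

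First I would isolate the common factors. For a fixed Durfee size $s$, the square contributes $x^{s^2}$ and $\pi_1$ contributes $x^{b-s}\binom{b-1}{s-1}_x$ (writing $\binom{n}{k}_x:=(x)_n/((x)_k(x)_{n-k})$ for the Gaussian binomial) on \emph{both} sides. Hence it suffices to compare, for each $s$, the generating function of the pairs $(\pi_2,\mu)$ on the $\mathcal A$-side with that of the pairs $(\pi_2^{*},\nu)$ on the $\mathcal B$-side. The key simplifying observation is that the defining relation of $\mathcal A$, namely $\mu_1=\lambda_1-1$, involves $\lambda$ only through $\lambda_1=s+(\pi_2)_1$; thus it reads $\mu_1=(s-1)+(\pi_2)_1$ and couples $\mu$ to $\pi_2$ \emph{only through the largest part of $\pi_2$}. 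On the $\mathcal B$-side the two conditions decouple completely, giving the product $\tfrac{x^{s-1}}{(x)_{s-1}}\,M(x)$, where $\tfrac{x^{s-1}}{(x)_{s-1}}$ is the generating function for partitions with exactly $s-1$ parts and $M(x):=\sum_{v\ge0}\tfrac{x^{2v}}{(x)_v}$ is the one for partitions $\nu$ with $\nu_1=\nu_2$ (the empty partition included).

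The computation on the $\mathcal A$-side is then short. Splitting according to $p:=(\pi_2)_1\ge0$ and using the standard generating functions $x^p\binom{s+p-1}{s-1}_x$ for partitions with at most $s$ parts and largest part exactly $p$, and $\tfrac{x^{s-1+p}}{(x)_{s-1+p}}$ for partitions with largest part exactly $s-1+p$, the $\mathcal A$-side contribution for fixed $s$ equals
\[
\frac{x^{s-1}}{(x)_{s-1}}+\sum_{p\ge1}x^p\binom{s+p-1}{s-1}_x\frac{x^{s-1+p}}{(x)_{s-1+p}}.
\]
Now the elementary identity $\binom{s+p-1}{s-1}_x/(x)_{s-1+p}=1/((x)_{s-1}(x)_p)$ collapses every summand, turning this into $\tfrac{x^{s-1}}{(x)_{s-1}}\big(1+\sum_{p\ge1}\tfrac{x^{2p}}{(x)_p}\big)=\tfrac{x^{s-1}}{(x)_{s-1}}M(x)$, which is exactly the $\mathcal B$-side contribution. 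Reinstating the common factors $x^{s^2}\,x^{b-s}\binom{b-1}{s-1}_x$ and summing over $s=1,\dots,b$ yields $A_b(x)=B_b(x)$, hence $|\mathcal A(a,b)|=|\mathcal B(a,b)|$ for all $a$.

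I expect the main obstacle to be the combinatorial bookkeeping of the Durfee decomposition rather than any analytic difficulty: one must verify carefully that cuteness is equivalent to ``$\pi_2$ has exactly $s-1$ parts'', that the affine coupling $\mu_1=\lambda_1-1$ truly depends on $\lambda$ only via $s$ and $(\pi_2)_1$, and that the below-square data $\pi_1$ and the square itself contribute identically to both sides, so that these factors may be cancelled term-by-term in $s$. Once this alignment is in place, the remaining work is the one-line $q$-binomial cancellation above. If a purely bijective proof is preferred, the same decomposition gives a Durfee-preserving bijection: fix the square and $\pi_1$, and use the classical box bijection underlying $\binom{s+p-1}{s-1}_x/(x)_{s-1+p}=1/((x)_{s-1}(x)_p)$ to reshuffle $(\pi_2,\mu)$ into $(\pi_2^{*},\nu)$.
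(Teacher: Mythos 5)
Your proof is correct and is essentially the paper's own argument recast in generating-function form: both proofs fix the Durfee square of $\lambda$ and the partition below it, observe that the condition $\lambda_1=\mu_1+1$ couples $\mu$ to $\lambda$ only through $\lambda_1=s+(\pi_2)_1$, and conclude via the Gaussian binomial identity $\binom{s+p-1}{s-1}_x/(x)_{s+p-1}=1/((x)_{s-1}(x)_p)$. The paper (following Petrov) packages this as a Durfee-preserving bijection whose inner step $\phi_{u,v}$ exists by that very identity, which is precisely the bijective variant you sketch in your final paragraph.
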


Before starting the proof, let us clarify the definitions of the sets $\mathcal{A}(a,b)$ and $\mathcal{B}(a,b)$ with an example. Let $a=9$ and $b=4$. Then
\begin{align*}
\mathcal{A}(9,4)=\{& ((2,1,1,1),(1,1,1,1)), & \mathcal{B}(9,4)=\{&((1,1,1,1),(1,1,1,1,1)),\\
& ((3,1,1,1),(2,1)), & &((1,1,1,1),(2,2,1)), \\
& ((3,2,1,1),(2)),& &((3,2,1,1),(1,1)), \\
& ((2,2,1,1),(1,1,1)) & & ((3,2,2,2),\emptyset),\\
& ((2,2,2,1),(1,1))& &((4,2,2,1),\emptyset), \\
& ((2,2,2,2),(1))\} & &((5,2,1,1), \emptyset)\}.
\end{align*}

\begin{proof}[Proof of Lemma $\ref{|A|=|B|}$]
Let us define a bijection 
$$
\Phi(a,b):\mathcal{A}(a,b) \longrightarrow \mathcal{B}(a,b), \quad (\lambda,\mu)\longmapsto (\lambda^*,\mu^*).
$$
Figure \ref{fig:bijection} provides an auxiliary illustration for the proof.

\begin{figure}
{\caption{\textbf{Graphical representation of $\Phi(a,b)$}\label{fig:bijection}}}
\includegraphics[width=\textwidth]{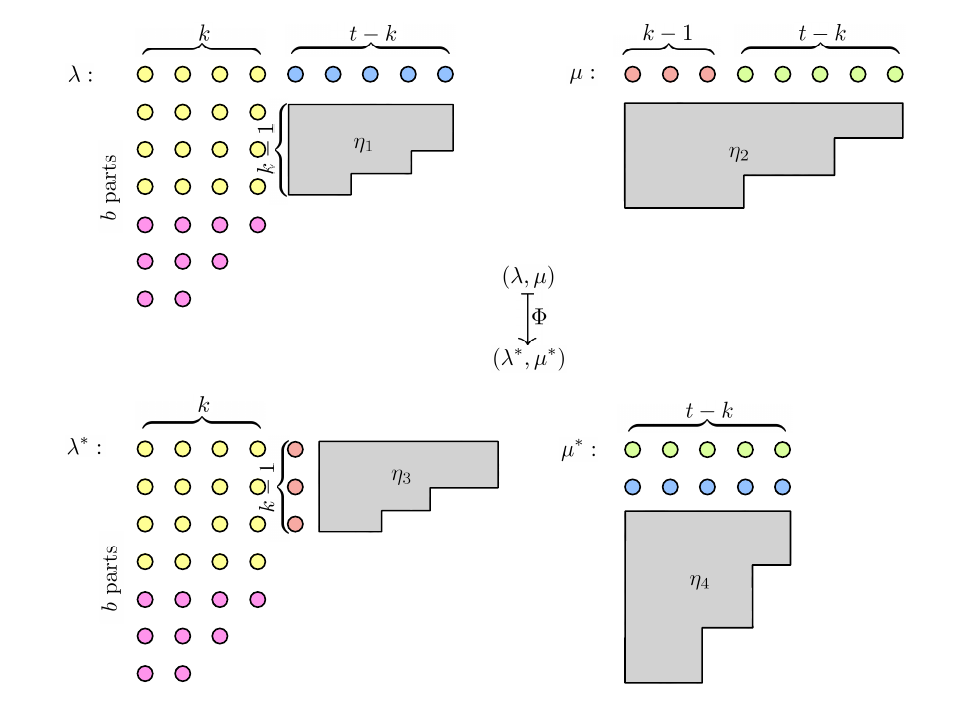}
\end{figure}

To define $\Phi(a,b)$, we first introduce the following map. For non-negative integers $u$ and $v$, let $\phi_{u,v}$ denote a bijection between pairs of partitions $(\eta_1, \eta_2) \mapsto (\eta_3, \eta_4)$, where:
\begin{itemize}
    \item $\eta_1$ (meaning its Ferrers diagram) fits in the rectangle $u \times v$,
    \item $\eta_2$ fits in the strip $(u+v) \times \infty$,
    \item $\eta_3$ fits in the strip $\infty \times v$,
    \item $\eta_4$ fits in the strip $u \times \infty$,
    \item $|\eta_1|+|\eta_2|=|\eta_3|+|\eta_4|$.
\end{itemize}
The existence of such a bijection is guaranteed by the Gaussian binomial identity, which states that
$$\binom{u+v}{u}_q=\frac{(q)_{u+v}}{(q)_u(q)_v},$$
where the $q$-binomial coefficient $\binom{u+v}{u}_q$ is the generating function for partitions whose Ferrers diagram fits inside a $u \times v$ rectangle, and $\frac{1}{(q)_u}$ is the generating function for partitions into at most $u$ parts (see, for instance, \cite{Andrews} or any standard textbook on partitions).

Now, let $(\lambda,\mu) \in \mathcal{A}(a,b)$, with $\lambda_1=t$ (and hence $\mu_1=t-1)$, and assume that the Durfee square of $\lambda$ is $k$, where $t$ and $k$ are positive integers.
Let $\pi_1(\lambda)$ be the partition to the right of the Durfee square of $\lambda$, let $\pi_2(\lambda)$ be the partition below the Durfee square. Note that $\lambda$ is uniquely determined by the size of its Durfee square together with $\pi_1(\lambda)$ and $\pi_2(\lambda)$.

We now define $\Phi(\lambda,\mu)\coloneqq(\lambda^*, \mu^*)$ as follows.
\begin{itemize}
    \item $\Phi$ preserves the size of the Durfee square of $\lambda$, that is $\lambda^*$ has the same Durfee square size as $\lambda$.
    \item $\Phi$ preserves the partition below the Durfee square of $\lambda$, i.e.  $\pi_2(\lambda)=\pi_2(\lambda^*)$.
    \item  Let $\pt(\pi_1(\lambda))=n$ and $\pt(\mu)=m$. Define
    $$\eta_1 \coloneqq((\pi_1(\lambda))_2,\dots,(\pi_1(\lambda))_n),$$ that is, $\eta_1$ consists of all the parts of $\pi_1(\lambda)$, except the first one, and 
    $$\eta_2\coloneqq(\mu_2,\dots,\mu_m),$$
    and let $(\eta_3,\eta_4)\coloneqq \phi_{t-k,k-1}(\eta_1,\eta_2)$. Set
    \begin{align*}
    \pi_1(\lambda^*)&\coloneqq ((\eta_3)_1+1,\dots,(\eta_3)_{k-1}+1)\quad \text{and}\\
    \mu{^*}&\coloneqq (t-k,t-k,\eta_4). 
     \end{align*}
    \end{itemize}

Then, $\Phi(a,b)$ is indeed a bijection: the construction reduces to check fixed values of $t$ and $k$, and the conclusion follows from the fact that
$\phi_{t-k,k-1}$ is itself a bijection.
\end{proof}

We are now ready to prove Theorem \ref{gen_fun_U}. The structure of the proof follows that of \cite[Theorem 1.3]{SpigaAGL}.
\begin{proof}[Proof of Theorem $\ref{gen_fun_U}$]
Let
$$F(x,y) \coloneqq \sum_{\lambda \in \Lambda }y^{\pt(\lambda)}x^{|\lambda|}$$
denote the generating function for the number of partitions in $\Lambda$, encoded by size and by number of parts. Define
$$\bar{F}(x,y)\coloneqq \sum_{m=0}^{\infty}\left( \frac{x^m}{1-x}\sum_{i=1}^{m}\frac{(-1)^{i}x^{i(i-1)/2}(x^i-1)}{(x)_{m-i}}\right)y^m.$$
To prove the claim, we need to show that $F(x,y)=\bar{F}(x,y)$. 
Let
\begin{align*}
F_1(x,y)\coloneqq\sum_{a=0}^{\infty}\frac{x^a}{(x)_a}y^a, &&F_2(x,y)\coloneqq\sum_{b=0}^{\infty}(x^b-1)x^{\frac{b(b+1)}{2}}(-y)^b,
\end{align*}
and observe that
\begin{equation}
\label{F_bar}
\bar{F}(x,y)=\frac{1}{1-x}F_1(x,y)F_2(x,y).
\end{equation}
Recalling that $x^a/(x)_a$ is the generating function for the number of partitions into $a$ parts, we get that
$$F_1(x,y)=\sum_{\lambda}x^{|\lambda|}y^{\pt(\lambda)}$$
is the generating function for the number of partitions, encoded by size and by number of parts. From \cite[p.~16]{Andrews} (or from any standard textbook on partitions), it follows that
$$
F_1(x,y)=\prod_{n=1}^{\infty}(1-yx^n)^{-1}.
$$
Now, we turn our attention to 
\begin{equation}
    \label{F_2}
F_2(x,y)=\sum_{b=0}^{\infty}(x^b-1)x^{\frac{b(b+1)}{2}}(-y)^b=\sum_{b=0}^{\infty}x^{\frac{b(b+3)}{2}}(-y)^b-\sum_{b=0}^{\infty}x^{\frac{b(b+1)}{2}}(-y)^b.
\end{equation}
In view of Eq. \eqref{F_2}, let us define
\begin{align*}
F_2^1(x,y)\coloneqq\sum_{b=-\infty}^{\infty}x^{\frac{b(b+3)}{2}}(-y)^b &&\text{and} &&F_2^2(x,y) \coloneqq\sum_{b=-\infty}^{\infty}x^{\frac{b(b+1)}{2}}(-y)^b.
\end{align*}
We now require Jacobi's triple product identity, which states that
\begin{equation}
\label{Jacobi_triple}
\sum_{j=-\infty}^{\infty}q^{j^2}z^j=\prod_{j=1}^{\infty}(1+zq^{2j-1})(1+z^{-1}q^{2j-1})(1-q^{2j}).
\end{equation}
We apply Eq. \eqref{Jacobi_triple} to both $F_2^1(x,y)$ and $F_2^2(x,y)$, obtaining:
\begin{align*}
F_2^1(x,y)&=\sum_{b=-\infty}^{\infty}x^{\frac{b(b+3)}{2}}(-y)^b=\sum_{b=-\infty}^{\infty}(x^{\frac{1}{2}})^{b^2}(-yx^{\frac{3}{2}})^b \\
&=\prod_{b=1}^{\infty}(1-yx^{\frac{3}{2}}x^{\frac{2b-1}{2}})(1-y^{-1}x^{-\frac{3}{2}}x^{\frac{2b-1}{2}})(1-x^b)\\
&=\prod_{b=1}^{\infty}(1-yx^{b+1})(1-y^{-1}x^{b-2})(1-x^b),
\end{align*}
and
\begin{align*}
F_2^2(x,y)&=\sum_{b=-\infty}^{\infty}x^{\frac{b(b+1)}{2}}(-y)^b=\sum_{b=-\infty}^{\infty}(x^{\frac{1}{2}})^{b^2}(-yx^{\frac{1}{2}})^b \\
&=\prod_{b=1}^{\infty}(1-yx^{\frac{1}{2}}x^{\frac{2b-1}{2}})(1-y^{-1}x^{-\frac{1}{2}}x^{\frac{2b-1}{2}})(1-x^b)\\
&=\prod_{b=1}^{\infty}(1-yx^{b})(1-y^{-1}x^{b-1})(1-x^b).
\end{align*}
Note that $F_2^1(x,y)=-\frac{1}{xy}F_2^2(x,y)$. Therefore,
\begin{equation}
\label{jacobi}
F_2^1(x,y)-F_2^2(x,y)=-\frac{xy+1}{xy}\prod_{b=1}^{\infty}(1-yx^b)(1-y^{-1}x^{b-1})(1-x^b).
\end{equation}
The function we are interested in is $F_2(x,y)$, which is obtained by restricting the infinite sum 
$$F_2^1(x,y)-F_2^2(x,y)=\sum_{b=-\infty}^{\infty}(-y)^bx^{b(b+3)/2}-(-y)^bx^{b(b+1)/2},$$ 
to the non-negative indices $b \ge 0$. In view of Eq. \eqref{jacobi}, let $F_3(x,y)$ be the part of the product $-\frac{xy+1}{xy}\prod_{b=1}^{\infty}(1-yx^b)(1-y^{-1}x^{b-1})$ corresponding to the non-negative powers of $y$.
Then
$$F_2(x,y)=F_3(x,y)\prod_{n=1}^{\infty}(1-x^n)$$
and hence, using Eq. \eqref{F_bar}, we have
$$\bar{F}(x,y)=\frac{1}{1-x}F_1(x,y)F_2(x,y)=\prod_{n=2}^{\infty}(1-x^n)\prod_{n=1}^{\infty}(1-yx^n)^{-1}F_3(x,y).$$
Recall that we want to prove that $F(x,y)=\bar{F}(x,y)$, and to do this, it suffices to verify that $F(x,y)$ satisfies the same identity as $\bar{F}(x,y)$, that is
\begin{equation}
\label{identity_F}
F(x,y)\prod_{n=2}^{\infty}(1-x^n)^{-1}=\prod_{n=1}^{\infty}(1-yx^{n})^{-1}F_3(x,y).
\end{equation}
To conclude the proof, we proceed to show that for every pair of non-negative integers $a$ and $b$, the coefficients of $x^ay^b$ on both sides of Eq. \eqref{identity_F} coincide.

Note that $\prod_{n=2}^{\infty}(1-x^n)^{-1}$ is the generating function for partitions with no part of length 1, hence, dually, it is the generating function for partitions $\mu$ satisfying $\mu_1=\mu_2$. Since $F(x,y)$ is the generating function of $\Lambda$, the coefficient of $x^ay^b$ on the left hand side of Eq. \eqref{identity_F} equals the cardinality of the set:
\begin{equation}
\label{B(a,b)}
\mathcal{B}(a,b) =\{(\lambda,\mu) \mid \lambda, \mu \text{ partitions }, \pt(\lambda)=b, |\lambda|+|\mu|=a, \lambda \in \Lambda, \mu_1=\mu_2 \}.
\end{equation}
We now turn our attention to the right hand side of Eq. \eqref{identity_F}. Since $  F_3(x,y)$ is the partial sum  of the infinite product $-\left(1+\frac{1}{xy}\right)\prod_{n=1}^{\infty}(1-yx^n)(1-y^{-1}x^{n-1})$,
corresponding to the non-negative powers of $y$, we consider instead the expanded product

\begin{equation}
\label{-A-A'}
-\left(1+\frac{1}{xy}\right)\prod_{n=1}^{\infty}(1+yx^n+y^2x^{2n}+y^3x^{3n}+\cdots)\prod_{n=1}^{\infty}(1-yx^n)(1-y^{-1}x^{n-1})
=-A(x,y)-A'(x,y)
\end{equation}
where we have defined
\begin{align*}
A(x,y)&\coloneqq \prod_{n=1}^{\infty}(1+yx^n+y^2x^{2n}+y^3x^{3n}+\cdots)\prod_{n=1}^{\infty}(1-yx^n)\prod_{n=1}^{\infty}(1-y^{-1}x^{n-1}), \\
\intertext{and}
A'(x,y)&\coloneqq\frac{1}{xy}A(x,y).
\end{align*}
Let us compute the contribution to $x^ay^b$ from each factor in $A(x,y)$ (respectively $A'(x,y)$), taking into account that the contributions from the infinite product $\prod_{n=1}^{\infty}(1-yx^n)(1-y^{-1}x^{n-1})$ (respectively $\frac{1}{xy}\prod_{n=1}^{\infty}(1-yx^n)(1-y^{-1}x^{n-1})$) must come from a non-negative power of $y$. This means that:
\begin{enumerate}
    \item in $A(x,y)$ we must take at least as many $y$s from the second infinite product as $y^{-1}$ from the third infinite product;
    \item in $A'(x,y)$, since there is a factor $(xy)^{-1}$, we are required to take at least one more $y$ from the second infinite product than the number of $y^{-1}$ from the third infinite product.
\end{enumerate}
Let us formalise what we have just described. 
We define an  \textit{$(a,b)$-choice} in $A(x,y)$ (resp. in $A'(x,y)$), or simply, a \textit{choice}, to be a contribution to the summand $x^ay^b$ in $A(x,y)$ (resp. in $A'(x,y)$). Precisely, an $(a,b)$-choice is uniquely determined by selecting terms $$y^{v_1}x^{v_1c_1},\dots,y^{v_{l_1}}x^{v_{l_1}c_{l_1}}$$ from the first infinite product in $A(x,y)$ (resp. $A'(x,y)$) (and 1 from all other factors in the first  product), terms $$-yx^{a_1},\dots,-yx^{a_{l_2}}$$ from the second infinite product (and 1 from all other factors in the second product), and terms $$-y^{-1}x^{b_1},\dots,-y^{-1}x^{b_{l_3}}$$ from the third infinite product (and 1 from all other factors in the third product). 
The following conditions must be satisfied:
\begin{enumerate}
    \item for $A(x,y)$: $l_2 \ge l_3$, $\sum_k v_k+l_2-l_3=b$ and $\sum_k v_k c_k+\sum_ia_i+\sum_j b_j=a$;
    \item for $A'(x,y)$: $l_2 > l_3$, $\sum_k v_k+l_2-l_3=b+1$ and $\sum_k v_k c_k+\sum_ia_i+\sum_j b_j=a+1$.
\end{enumerate}

Determining the coefficient of $x^ay^b$ in $A(x,y)$ (resp. in $A'(x,y)$) amounts to summing over all $(a,b)$-choices in $A(x,y)$ (resp. in $A'(x,y)$). 
When performing this summation, many choices cancel out pairwise. We now describe how this cancellation occurs.

Given a fixed $(a,b)$-choice \textbf{c} in $A(x,y)$ (resp. $A'(x,y)$), let $n$ be the maximal index for which either $y^{v}x^{vn}$ (for some $v \ge 1$), or $-yx^n$ appears in \textbf{c}. There are three possible cases.
\begin{itemize}
    \item If both $y^{v}x^{vn}$ and $-yx^n$ appear in \textbf{c}, then \textbf{c} cancels with the choice obtained by replacing $y^{v}x^{vn}$ with $y^{v+1}x^{(v+1)n}$ and removing $-yx^n$  (this corresponds to taking 1 in the factor $1-yx^n$ of the second infinite product).
    \item If $y^vx^{vn}$ appears in \textbf{c} but $-yx^n$ does not, then \textbf{c} cancels with the choice obtained by replacing $y^vx^{vn}$ with $y^{v-1}x^{(v-1)n}$ and selecting $-yx^n$ in the second infinite product.
    \item Finally, if $-yx^n$ appears in \textbf{c} but $y^vx^{vn}$ does not, then this choice cancels with the choice where $-yx^n$ is replaced by $yx^n$ (this corresponds to selecting $yx^n$ in the factor $1+yx^n+y^2x^{2n}+\dots$ in the first infinite product, and selecting $1$ in the factor $1-yx^n$ of the second infinite product).
\end{itemize}

Observe that we cannot always perform such cancellations: when a term $-yx^n$ is removed from \textbf{c}, we must ensure that the number of terms of the form $-yx^m$ in the cancelling choice is at least, for $A(x,y)$, (resp. strictly bigger than, for $A'(x,y)$) the number of terms of the form $-y^{-1}x^{m'}$ Recall that this corresponds the conditions $l_2 \ge l_3$ for $A(x,y)$, and $l_2 > l_3$ for $A'(x,y)$.
Moreover, note that this cancellation process is an involution: applying the same procedure to the cancelling choice recovers the original choice \textbf{c}.

Therefore, in $A(x,y)$, the $(a,b)$-choices that remain (i.e., those that do not cancel) are exactly those where the number of $y$-terms equals the number of $y^{-1}$-terms, and $c_{l_1} \le a_{l_2}$ (recall that these quantities appeared in the definition of an $(a,b)$-choice above). Similarly, in $A'(x,y)$, the remaining $(a,b)$-choices are those where the number of $y$s is exactly one more than the number of $y^{-1}$s, and $c_{l_1} \le a_{l_2}$. Furthermore, observe that all remaining choices in $A(x,y)$ have positive sign, while all those in $A'(x,y)$ carry negative sign.

Given this analysis, determining the coefficient of $x^ay^b$ in $A(x,y)$ (respectively $A'(x,y))$  amounts to counting the number of the following possibilities.
\begin{enumerate}[label=(\arabic*)]
    \item For $A(x,y)$, choose: 
    \begin{enumerate}[label=(\roman*)]
        \item a non-negative integer $j$, and integers $0<c_1 < \cdots < c_j$ and $0 <v_1< \cdots <v_j$ (for the terms $y^{v_i}x^{v_ic_i}$ in the first infinite product),
        \item $k$ positive integers $0<a_1<a_2<\cdots<a_k$ (for the terms $yx^{a_i}$ in the second product),
        \item $k$ non-negative integers $0 \le b_1 < b_2 < \cdots < b_k$ (for the terms $y^{-1}x^{b{_i}}$ in the third product),
    \end{enumerate}
satisfying
$$c_j \le a_k,\text{ } \sum_{i=1}^jv_i=b,\text{ } \sum_{i=1}^{j}v_ic_i+\sum_{i=1}^ka_i+b_i=a.$$
\item For $A'(x,y)$, choose:
 \begin{enumerate}[label=(\roman*)]
        \item a non-negative integer $j$, and integers $0<c_1 < \cdots < c_j$ and $0 <v_1< \cdots <v_j$ (for the terms $y^{v_i}x^{v_ic_i}$ in the first infinite product),
        \item $k$ positive integers $0<a_1<a_2<\cdots<a_k$ (for the terms $yx^{a_i}$ in the second product),
        \item $k-1$ non-negative integers $0 \le b_1 < b_2 < \cdots < b_{k-1}$ (for the terms $y^{-1}x^{b{_i}}$ in the third product),
    \end{enumerate}
satisfying
$$c_j \le a_k,\text{ }\sum_{i=1}^jv_i=b,\text{ }\sum_{i=1}^{j}v_ic_i+\sum_{i=1}^ka_i+\sum_{i=1}^{k-1}b_i=a+1.$$
\end{enumerate}
Then, to obtain the coefficient of $x^ay^b$ on the right hand side of Eq. \eqref{identity_F}, subtract the number of choices 
in (2) from the number of choices in (1).

Note that, for both $A(x)$ and $A'(x)$, the choice at point (i), with the condition that $c_j \le a_k$, corresponds to selecting a partition $\lambda$ with maximal part of size at most $a_k$.

Now, for $A(x)$, the choices at point (i) and (ii), correspond to selecting  a partition $\mu$ with maximal part equal to $a_k$. Indeed, for such a partition, let $k$ be the size of its Durfee square, then $a_1,\dots,a_k$ correspond to the part of the Ferrers diagram above the main diagonal, and $b_1,...,b_k$ correspond to the part below it (as in the first example of Fig. \ref{fig:examples_mu}).

On the other hand, for $A'(x)$, the choices at point (i) and (ii) correspond to selecting  a partition $\mu$ with maximal part $a_k$ and $\mu_2 < \mu_1$: indeed, for such a partition, let $\mu_1=a_1$, and if we write $\overline{\mu}=(\mu_2,\dots,\mu_r)$, then, as above, $a_2,\dots,a_k$ correspond to the part of the Ferrers diagram of $\overline{\mu}$ above the diagonal of its Durfee square, and $b_1,\dots,b_k$ to the part below.
To clarify these constructions, see the examples in Fig. \ref{fig:examples_mu}. Note in particular that the motivation for the second construction relies on the fact that the number of $b_i$s is one less than the number of $a_j$s, and that $b_1$ may be 0 (as in the second example of Fig. \ref{fig:examples_mu}).
\begin{figure}
\includegraphics[width=\textwidth]{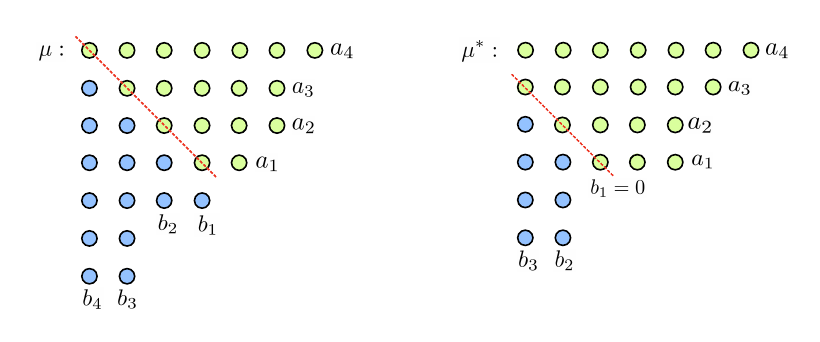}
\caption{\label{fig:examples_mu}}
\end{figure}
Summing up, we can rephrase points (1) and (2) above as follows:
\begin{enumerate}[label=(\arabic*)]
    \item for $A(x,y)$ choose
    \begin{enumerate}[label=(\roman*)]
        \item a partition $\lambda$ with maximal part $a_k$,
        \item a partition $\mu$ with maximal part at most $a_k$,
    \end{enumerate}
    satisfying $\pt(\lambda)=b$ and $|\lambda|+|\mu|=a$;
     \item for $A'(x,y)$ choose
    \begin{enumerate}[label=(\roman*)]
        \item a partition $\lambda$ with maximal part $a_k$,
        \item a partition $\mu$ with maximal part at most $a_k$ and $\mu_1 >\mu_2$,
    \end{enumerate}
    satisfying $\pt(\lambda)=b$ and $|\lambda|+|\mu|=a+1$.
\end{enumerate}
Defining 
\begin{align*}
\mathcal{E}(a,b)&\coloneqq\{(\lambda,\mu) \mid \pt(\lambda)=b, |\lambda|+|\mu|=a, \max(\lambda) \le \max(\mu)\},\\
\mathcal{F}(a,b)&\coloneqq\{(\lambda,\mu) \mid \pt(\lambda)=b, |\lambda|+|\mu|=a+1, \max(\lambda) \le \max(\mu), \mu_2 <\mu_1\}
\end{align*}
and recalling Eq. \eqref{-A-A'}, we obtain that the coefficient of $x^ay^b$ on the right hand side of Eq. \eqref{identity_F} coincides with the difference $|\mathcal{E}(a,b)|-|\mathcal{F}(a,b)|.$ Note that there is a bijective correspondence between $\mathcal{F}(a,b)$ and
$$\mathcal{G}(a,b)\coloneqq
\{(\lambda,\mu) \mid \pt(\lambda)=b, |\lambda|+|\mu|=a, \max(\lambda) \le \max(\mu)+1\},
$$
obtained by subtracting -1 to the part $\mu_1$ of $\mu$ in $\mathcal{F}(a,b)$.
Therefore, $|\mathcal{E}(a,b)|-|\mathcal{F}(a,b)|=|\mathcal{E}(a,b)|-|\mathcal{G}(a,b)|=|\mathcal{A}(a,b)|$, where 
$$\mathcal{A}(a,b)=\{(\lambda,\mu) \mid \lambda,\mu \textup{ partitions}, \pt(\lambda)=b, |\lambda|+|\mu|=a, \lambda_1=\mu_1+1\}.$$
Since we obtained that the coefficient of $x^ay^b$ in the left hand side of Eq. \eqref{identity_F} is the cardinality of the set $\mathcal{B}(a,b)$ defined in Eq. \eqref{B(a,b)}, applying Lemma \ref{|A|=|B|} we can finally conclude.
\end{proof}

\section{Affine symplectic groups}
\label{sec_symplectic}
The symplectic group $\Sp_{2m}(q)$ is the subgroup of $\GL_{2m}(q)$ that preserves a non-degenerate symplectic form on the vector space $\mathbb{F}_q^{2m}$.

Before describing the cycle index of the symplectic groups, let us recall Wall's combinatorial description of its conjugacy classes. We describe in detail the odd-characteristic case, following Fulman's treatment of the subject in \cite[Sec. 4.2]{Fulman_cycle_index}. 

Assume $q$ odd. Given a polynomial $\phi \in \mathbb{F}_q[z]$, with non-zero constant term, define the polynomial $\bar{\phi}$ by
\begin{equation}
\label{bar_phi}
\bar{\phi}(z)=\phi(0)^{-1}z^{\deg(\phi)}\phi(z^{-1}).
\end{equation}
Wall \cite{Wall} proved that a conjugacy class in $\Sp_{2m}(q)$ is parametrised by the following combinatorial data. To each monic, non-constant, irreducible polynomial $\phi \neq z\pm1$ associate a partition $\lambda_{\phi}$ of some non-negative integer $|\lambda_{\phi}|$. To $\phi$ equal to $z-1$ or $z+1$ associate a \textit{symplectic signed partition} $\lambda_{\phi}^{\pm}$, which means a partition of some natural number $|\lambda^{\pm}_{\phi}|$ where all odd parts occur with even multiplicity, together with a choice of sign for the set of parts of size $i$, for each even $i>0$, see Fig. \ref{fig:symplectic_signed_partition} for an example.
These data correspond to a conjugacy class in $\Sp_{2m}(q)$ if and only if 
\begin{enumerate}[label=(\roman*)]
    \item $|\lambda_z|=0$,
    \item $\lambda_{\phi}=\lambda_{\bar{\phi}}$,
    \item $\sum_{\phi=z\pm1}|\lambda_{\phi}^{\pm}|+\sum_{\phi \neq z \pm1}|\lambda_{\phi}|\deg (\phi)=2m$.
\end{enumerate}

\begin{figure}
\floatbox[{\capbeside\thisfloatsetup{capbesideposition={right,bottom},capbesidewidth= 11cm}}]{figure}[\FBwidth]
{\caption{\textbf{Example of a symplectic signed partition:} here, the $+$ corresponds to the part of size 8 and the $-$ corresponds to the parts of size 4 and 2}\label{fig:symplectic_signed_partition}}
{\includegraphics[]{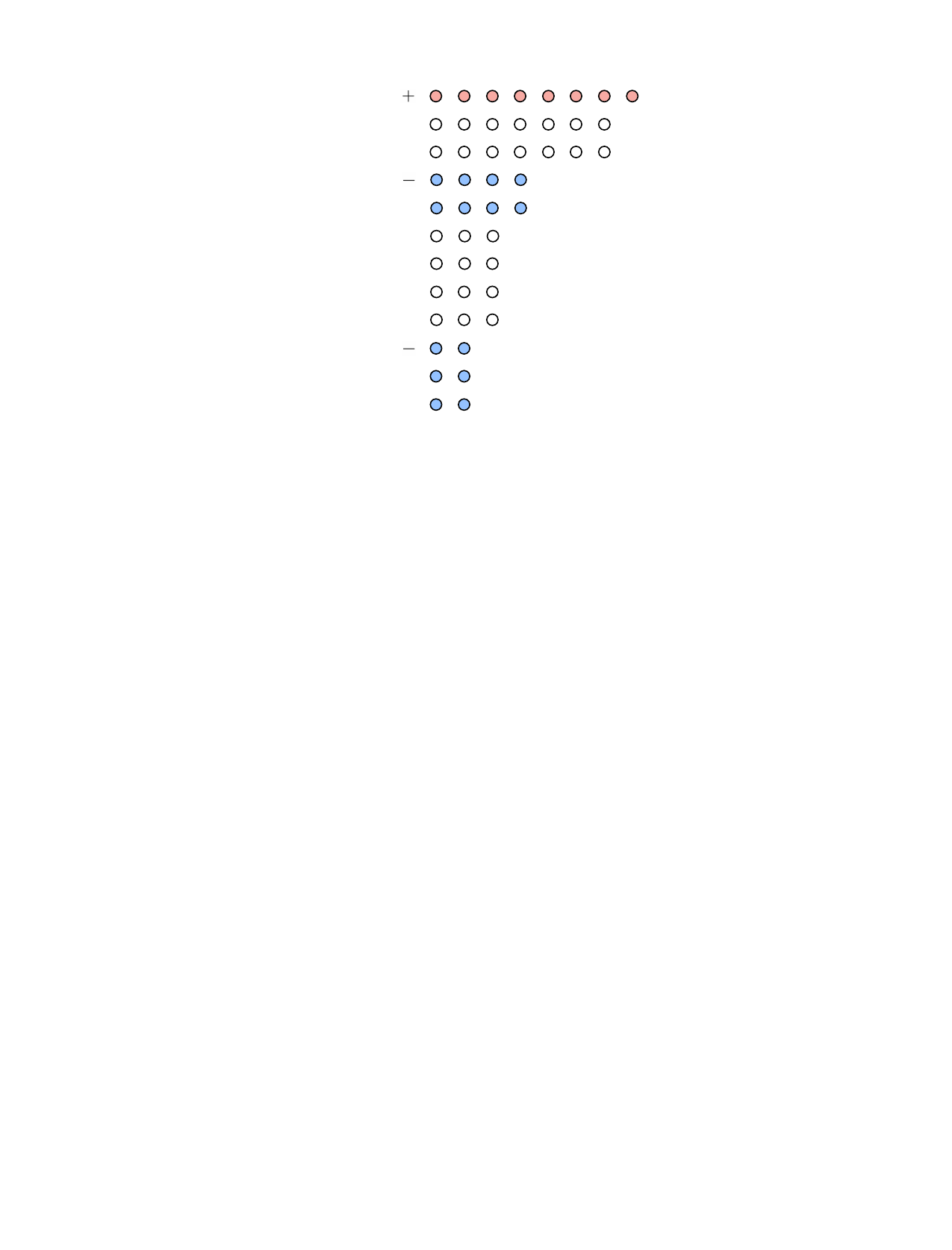}}
\end{figure}
If $q$ is even, then a precise one-to-one combinatorial description of the conjugacy classes is more delicate to see directly from \cite{Wall}, and we refer the reader to \cite{DeFran} for a recent and comprehensive treatment of the even characteristic case. However, here we are only interested in the $\GL_{2m}(q)$-rational canonical form data of elements of $\Sp_{2m}(q)$. In particular, the number of elements of $\Sp_{2m}(q)$ having such fixed rational canonical data depends on $q$ in a way independent of the characteristic. This was proved by Fulman and Guralnick in the following theorem. 
From now on, all polynomials $\phi \in \mathbb{F}_{q}[z]$ will be assumed to be monic and irreducible, and for $a\in \Sp_{2m}(q)$, let $\lambda_{\phi}=\lambda_{\phi}(a)$ be the partition associated to $a$ via the polynomial $\phi$ by means of the $\GL_{2m}(q)$-rational canonical form of $a$.
Define
$$B(q,\lambda_{\phi})\coloneqq
\begin{cases}
(-1)^{|\lambda_{\phi}|}c_{\GL,-q^{\deg \phi/2}}(\lambda_{\phi}) & \text{ if } \phi = \bar{\phi} \text{ and }\phi\neq z\pm1, \\
(c_{\GL,q^{\deg \phi}}(\lambda_{\phi}))^{1/2} & \text{ if } \phi \neq \bar{\phi}.
\end{cases}$$
See also \cite{Fulman_cycle_index} for explicit computations of the rewritings of $B(q,\lambda_{\phi})$.
\begin{lemma}[ \cite{FG04}, Theorem 5.2]
\label{lemma_prop_unip_sp} Let $q$ be either odd or even.
 The proportion of elements of $\Sp_{2m}(q)$ with $\GL_{2m}(q)$-rational canonical form data $(\lambda_{\phi})_{\phi}$
 is 0 unless $\lambda_{\phi}=\lambda_{\bar{\phi}}$ for all  $\phi$, and 
 all odd parts of $\lambda_{z\pm 1}$ occur with even multiplicity. If these conditions are satisfied, then the proportion is
$$\prod_{\phi=z\pm1}\frac{1}{q^{\frac{1}{2}\sum_i((\lambda_{\phi})'_i)^2+\frac{1}{2}o(\lambda_{\phi})}\prod_i(1/q^2)_{\lfloor\frac{m_i(\lambda_{\phi})}{2}\rfloor}}\prod_{\phi \neq {z\pm1}}\frac{1}{B(q,\lambda_{\phi})}.$$
\end{lemma}
We define the \textit{cycle index} $Z_{\Sp}$ of the symplectic groups by
$$Z_{\Sp}\coloneqq 1 + \sum_{m=1}^{\infty}\frac{y^{2m}}{|\Sp_{2m}(q)|}\sum_{a \in \Sp_{2m}(q)}\prod_{\phi}x_{\phi,\lambda_{\phi}(a)}.$$
In \cite[Section~4.2]{Fulman_cycle_index}, Fulman defines and factorises the cycle index for symplectic groups in odd characteristic by assigning a variable $x_{\phi,\lambda^{\pm}}$ to each symplectic signed partition $\lambda^{\pm}$ when $\phi = z \pm 1$. 
Since in the present paper we are only interested in the underlying unsigned partition of $\lambda^{\pm}$, we adopt the above formulation. 
This choice also allows for a unified treatment including the even characteristic case.

From Lemma \ref{lemma_prop_unip_sp} (see also \cite[Theorem 12]{Fulman_cycle_index}), $Z_{\Sp}$ admits the following factorisation:

\begin{align*}
Z_{\Sp}=\prod_{\phi=z\pm1}&\sum_{\substack{|\lambda|=2m\\ i \text{ odd }\Rightarrow m_i(\lambda) \text{ even}}}x_{\phi,\lambda}\frac{y^{|\lambda|}}{q^{\frac{1}{2}\sum_i(\lambda'_i)^2 + \frac{1}{2}o(\lambda)}\prod_i(1/q^2)_{\lfloor\frac{m_i(\lambda)}{2}\rfloor}}\\ \cdot \prod_{\substack{\phi=\bar{\phi}\\\phi \neq z\pm1}}&\sum_{\lambda}x_{\phi,\lambda}\frac{(-y^{\deg \phi})^{|\lambda|}}{c_{\GL,-q^{\deg \phi /2}}(\lambda)}
\cdot
\prod_{\substack{\{\phi,\bar{\phi}\}\\\phi \neq \bar{\phi}}}\sum_{\lambda}x_{\phi,\lambda}x_{\bar{\phi},\lambda}
\frac{y^{2|\lambda|\deg \phi}}{c_{\GL,q^{\deg \phi}}(\lambda)}.
\end{align*}

In this section, for each $m\ge1$, we denote by $d_m(q)$ and $u_m(q)$ the values $\delta(\ASp_{2m}(q))$ and $\delta_p(\ASp_{2m}(q))$, respectively; and we write $d'_m(q)$ and $u'_m(q)$ for $\delta'(\Sp_{2m}(q))$ and $\delta_p'(\Sp_{2m}(q))$. Moreover, we set $d'_0(q)=u'_0(q)\coloneqq 1$.
Explicitly, in view of Equations \eqref{d'(X_m(q))} and \eqref{d'_p(X_m(q))}, and Lemma \ref{steinberg}\eqref{p_sp}, we have
\begin{align}
d'_m(q)&=\frac{1}{|\Sp_{2m}(q)|}\sum_{a \in \Sp_{2m}(q)}q^{-\dim \ker(a-1)}, \\
d_m(q)&=1-d'_m(q), \\
 u'_m(q)&=\frac{1}{|\Sp_{2m}(q)|}\sum_{\substack{a \in \Sp_{2m}(q) \\a \text{ unipotent}}}q^{-\dim \ker(a-1)},\\
 u_m(q)&=\frac{1}{q^m(1/q^2)_m}-u'_m(q).
\end{align}

As in the unitary case, we define the generating functions
\begin{align}
D_{\Sp} \coloneqq \sum_{m=1}^{\infty}d_m(q)y^{2m}, &&
U_{\Sp} \coloneqq \sum_{m=1}^{\infty}u_m(q)y^{2m}, \\
\label{D'_Sp}
D'_{\Sp}\coloneqq\sum_{m=0}^{\infty}d'_m(q)y^{2m}, && U'_{\Sp} \coloneqq\sum_{m=0}^{\infty}u'_m(q)y^{2m}.
\end{align}
\subsection{Derangements of $p$-power order: proof of Theorem \ref{p_main}\eqref{p_Symplectic}}

Let $a \in \Sp_{2m}(q)$ be unipotent. According to Lemma \ref{lemma_prop_unip_sp}, the $\GL_{2m}(q)$-rational canonical form of $a$ is determined by a partition $\lambda$ of $2m$ in which all odd parts have even multiplicity.

\begin{lemma}
\label{lemma_p_sympl}
$$u_m(q)=\sum_{\substack{|\lambda|=2m\\ i \text{ odd }\Rightarrow m_i \text{ even}}}\frac{1-q^{-\lambda'_1}}{q^{\frac{1}{2}\sum_i(\lambda'_i)^2+\frac{1}{2}o(\lambda)}\prod_i(1/q^2)_{\lfloor\frac{m_i}{2}\rfloor}}.$$
\begin{proof} Let $a\in \Sp_{2m}(q)$ be unipotent, and let $\lambda_{z-1}(a)$ be its $\GL_{2m}(q)$-rational canonical form type, that is $\lambda_{z-1}(a)$ is a partition of $2m$ in which all odd parts occur with even multiplicity.
Recalling that $\dim \ker(a-1)=\lambda_{z-1}(a)'_1$, the first equality in Eq. \eqref{prop_unip_der} yields
$$u_m(q)=\frac{1}{|\Sp_{2m}(q)|}\sum_{\substack{a \in \Sp_{2m}(q)\\a \text{ unipotent}}}\left(1-q^{-\dim \ker(a-1)}\right)=\frac{1}{|\Sp_{2m}(q)|}\sum_{\substack{a \in \Sp_{2m}(q)\\a \text{ unipotent}}}\left(1-q^{-\lambda_{z-1}(a)'_1}\right) .$$
Now, the conclusion immediately follows from the definition of the cycle index $Z_{\Sp}$. 
\end{proof}
\end{lemma}

The following identity, proven in \cite{FulmanStanton25}  by Fulman and Stanton, proves Theorem \ref{p_main}\eqref{p_Symplectic}.
\begin{thm*}[Restatement of Theorem \ref{conj_identities}\eqref{conj_intro_sympl}]
$$u_m(q)=\sum_{\substack{|\lambda|=2m\\ i \text{ odd }\Rightarrow m_i \text{ even}}}\frac{1-q^{-\lambda'_1}}{q^{\frac{1}{2}\sum_i(\lambda'_i)^2+\frac{1}{2}o(\lambda)}\prod_i(1/q^2)_{\lfloor\frac{m_i}{2}\rfloor}}=
\frac{1}{q^m(q+1)}\sum_{i=1}^{m}\frac{(-1)^{i-1}(q^{2i+1}+1)}{q^{i(i+1)}(1/q^2)_{m-i}}.
$$
\end{thm*}

\subsection{Derangements: proof of Theorem \ref{main}\eqref{Symplectic}}
Define
\begin{equation}
\label{T_Sp}
T_{\Sp}\coloneqq\sum_{m=0}^{\infty}\frac{1}{q^m(1/q^2)_m}y^{2m}.
\end{equation}
Recall the definitions of $D'_{\Sp}$ and $U'_{\Sp}$  in Eq. \eqref{D'_Sp}.
Analogously to Lemma \ref{fact_D'_u}, $D'_{\Sp}$ factorises as follows.
\begin{lemma}
\label{fact_D'_sp}
We have
$$D'_{\Sp}=T^{-1}_{\Sp}(1-y^2)^{-1}U'_{\Sp}.$$
\begin{proof}
Proceed as in the proof of Lemma \ref{fact_D'_u}, noting that, if we set all variables of $Z_{\Sp}$ equal to $1$, we obtain $\frac{1}{1-y^2}$, and if we set all variables $x_{z-1,\lambda}$ equal to 1 in the factor 
$$\sum_{\substack{|\lambda|=2m\\ i \text{ odd }\Rightarrow m_i(\lambda) \text{ even}}}x_{z-1,\lambda}\frac{y^{|\lambda|}}{q^{\frac{1}{2}\sum_i(\lambda'_i)^2+\frac{1}{2}o(\lambda)}\prod_i(1/q^2)_{\lfloor\frac{m_i(\lambda)}{2}\rfloor}}$$

of $Z_{\Sp}$, then the coefficient of $y^{2m}$ in this sum equals the proportion of unipotent elements of $\Sp_{2m}(q)$. By Lemma \ref{steinberg}, this proportion is $\frac{1}{q^m(1/q^2)_m}$.
Therefore,
\begin{equation*}
\sum_{\substack{|\lambda|=2m\\ i \text{ odd }\Rightarrow m_i(\lambda) \text{ even}}}\frac{y^{|\lambda|}}{q^{\frac{1}{2}\sum_i(\lambda'_i)^2+\frac{1}{2}o(\lambda)}\prod_i(1/q^2)_{\lfloor\frac{m_i(\lambda)}{2}\rfloor}}=\sum_{m=0}^{\infty}\frac{1}{q^m(1/q^2)_m}y^{2m}=T_{\Sp}.
\qedhere
\end{equation*} 
\end{proof}
\end{lemma}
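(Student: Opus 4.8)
The plan is to transcribe the proof of Lemma \ref{fact_D'_u} to the symplectic setting, the only structural change being that $Z_{\Sp}$ is graded by $y^{2m}$ rather than by $y^m$, so that the ``all-ones'' specialisation produces $(1-y^2)^{-1}$ instead of $(1-y)^{-1}$. First I would record the relevant coefficient extractions from $Z_{\Sp}$. For any $a\in\Sp_{2m}(q)$ the signed partition $\lambda^\pm_{z-1}(a)$ attached to $\phi=z-1$ records the Jordan structure at the eigenvalue $1$, so $(\lambda^\pm_{z-1}(a))'_1=\dim\ker(a-1)$. Consequently the definition of $Z_{\Sp}$ shows that $d'_m(q)$ is the coefficient of $y^{2m}$ in $Z_{\Sp}$ under the substitution $S$ that sets $x_{z-1,\lambda^\pm}=q^{-(\lambda^\pm)'_1}$ and all remaining variables $x_{\phi,\lambda}$ (including those at $\phi=z+1$ and at the paired $\bar\phi$) equal to $1$; since $d'_0(q)=1$ matches the leading term, this gives $Z_{\Sp}|_S=D'_{\Sp}$.

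Next I would isolate, in Fulman's factorisation, the single factor
$$P\coloneqq\sum_{\lambda^{\pm}}x_{z-1,\lambda^{\pm}}\frac{y^{|\lambda^{\pm}|}}{c_{\Sp,z-1,q}(\lambda^{\pm})},$$
and write $Z_{\Sp}=P\cdot R$, where $R$ is the product of all the other factors (those indexed by $z+1$, by the self-dual $\phi\neq z\pm1$, and by the pairs $\{\phi,\bar\phi\}$); the point is that absorbing the $z+1$ factor into $R$, rather than treating it separately, keeps the argument a verbatim copy of the unitary one. Two specialisations of $P$ are what I need. Under $S$ the coefficient of $y^{2m}$ in $P$ is exactly $u'_m(q)$, so $P|_S=U'_{\Sp}$; and setting every variable to $1$ turns $P$ into $\sum_{\lambda^{\pm}}y^{|\lambda^{\pm}|}/c_{\Sp,z-1,q}(\lambda^{\pm})$, whose $y^{2m}$-coefficient is the proportion of unipotent elements of $\Sp_{2m}(q)$, namely $\frac{1}{q^m(1/q^2)_m}$ by Lemma \ref{steinberg}, so that $P|_{x=1}=T_{\Sp}$. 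This last identity is precisely the one already recorded in the statement.

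The final step pins down $R$ at $x=1$. Specialising the whole cycle index to $x=1$ gives $Z_{\Sp}|_{x=1}=1+\sum_{m\ge1}y^{2m}=(1-y^2)^{-1}$, since for each $m$ the inner sum over $a\in\Sp_{2m}(q)$ contributes $|\Sp_{2m}(q)|$ and cancels the normalising factor. Combined with $Z_{\Sp}=P\cdot R$ and $P|_{x=1}=T_{\Sp}$, this yields $R|_{x=1}=(1-y^2)^{-1}T_{\Sp}^{-1}$. Because $S$ leaves all variables of $R$ equal to $1$, we have $R|_S=R|_{x=1}$, whence
$$D'_{\Sp}=Z_{\Sp}|_S=P|_S\cdot R|_S=U'_{\Sp}\cdot(1-y^2)^{-1}T_{\Sp}^{-1}=T_{\Sp}^{-1}(1-y^2)^{-1}U'_{\Sp}.$$
I do not expect a genuine obstacle: the argument is a direct adaptation of the unitary case, and the only points demanding care are the identification $(\lambda^\pm_{z-1}(a))'_1=\dim\ker(a-1)$ for the \emph{signed} partition at $z-1$, and the bookkeeping that merges the $z+1$ factor into $R$ so that exactly one power of $T_{\Sp}$ survives in the denominator.
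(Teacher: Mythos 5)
Your proposal is correct and takes essentially the same approach as the paper: it is exactly the ``proceed as in the unitary case'' argument (the paper's self-reference to Lemma \ref{fact_D'_sp} is a typo for Lemma \ref{fact_D'_u}) spelled out in full, with the same key inputs --- the specialisation $x_{z-1,\lambda^{\pm}}=q^{-(\lambda^{\pm})'_1}$ extracting $D'_{\Sp}$ and $U'_{\Sp}$, Steinberg's theorem giving $P|_{x=1}=T_{\Sp}$, and the all-ones specialisation of $Z_{\Sp}$ giving $(1-y^2)^{-1}$. Your explicit bookkeeping of the remaining factor $R$ and the observation $R|_S=R|_{x=1}$ are precisely what the paper leaves implicit.
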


We can now prove Theorem \ref{main}\eqref{Symplectic}.
\begin{proof}[Proof of Theorem $\ref{main}\eqref{Symplectic}$]
Applying Theorem \ref{conj_identities}\eqref{conj_intro_sympl} we get:
\begin{align*}
u'_m(q)& =\frac{1}{q^m(1/q^2)_m}-u_m(q) \\
& =\frac{1}{q^m(1/q^2)_m}-\frac{1}{q^m(q+1)}\sum_{i=1}^{m}\frac{(-1)^{i-1}(q^{2i+1}+1)}{q^{i(i+1)}(1/q^2)_{m-i}}\\
& = \frac{1}{q^m(q+1)}\sum_{i=0}^{m}\frac{(-1)^{i}(q^{2i+1}+1)}{q^{i(i+1)}(1/q^2)_{m-i}}.
\end{align*}
Now, note that the following factorisation for $U'_{\Sp}$ holds:
\begin{align}
    \nonumber U'_{\Sp}=\sum_{m=0}^{\infty}u'_m(q)y^{2m}&=\sum_{m=0}^{\infty}\frac{1}{q+1}\sum_{i=0}^{m}
    \frac{(-1)^{i}(q^{2i+1}+1)}{q^{i(i+2)}q^{m-i}(1/q^2)_{m-i}}
    y^{2m}\\
    \nonumber &=\sum_{m=0}^{\infty}\frac{1}{q+1}\sum_{i=0}^{m}\frac{(-1)^i(q^{2i+1}+1)}{q^{i(i+2)}}y^{2i}\cdot\frac{1}{q^{m-i}(1/q^2)_{m-i}}y^{2(m-i)}\\
    \label{fatt_sp}
&=\overline{D}_{\Sp}\cdot T_{ \Sp},
\end{align}
where we have defined
$$\overline{D}_{\Sp}:=\frac{1}{q+1}\sum_{i=0}^{\infty}\frac{(-1)^i(q^{2i+1}+1)}{q^{i(i+2)}}y^{2i}.$$
By Eq. \eqref{fatt_sp} and Lemma \ref{fact_D'_sp}, we have 
\begin{align*}
D'_{\Sp}&=T^{-1}_{\Sp}\cdot U'_{\Sp} \cdot (1-y^2)^{-1}= \overline{D}_{\Sp} \cdot (1-y^2)^{-1} \\
&=\frac{1}{q+1}\sum_{i=0}^{\infty}\frac{(-1)^i(q^{2i+1}+1)}{q^{i(i+2)}}y^{2i}\sum_{j=0}^{\infty}y^{2j}=\sum_{m=0}^{\infty}\frac{1}{q+1}\sum_{i=0}^{m}\frac{(-1)^i(q^{2i+1}+1)}{q^{i(i+2)}}y^{2m}.
\end{align*}
Hence,
$$d'_m(q)=\frac{1}{q+1}\sum_{i=0}^{m}\frac{(-1)^i(q^{2i+1}+1)}{q^{i(i+2)}}$$
and
$$d_m(q)=1-d'_m(q)=\frac{1}{q+1}\sum_{i=1}^{m}\frac{(-1)^{i-1}(q^{2i+1}+1)}{q^{i(i+2)}}=\frac{1}{q+1}\left(1-\frac{1}{(-q)^{m(m+2)}}\right),$$
where the last equality follows easily by induction.
\end{proof}

\section{Affine orthogonal groups}
\label{sec_orthogonal}
We start by recalling some terminology concerning bilinear forms. A non-degenerate bilinear form $N$ is called \textit{null} if the vector space $V$ on which it is defined can be decomposed as the direct sum of two totally isotropic subspaces, that is, as the sum of two subspaces on which the form vanishes for all pairs of vectors.

Two non-degenerate bilinear forms $Q$ and $Q'$ are said to be \textit{equivalent} if $Q'$ is isomorphic to the direct sum of $Q$ and a null form $N$. The \textit{Witt type} of $Q$ is the equivalence class of $Q$ under this equivalence relation. Over $\mathbb{F}_q$, there exist exactly four Witt types, denoted by Wall \cite{Wall} as $\mathbf{0},\mathbf{1},\delta,\omega$, which correspond to the forms $x^2,\delta x^2,x^2-\delta y^2$ respectively, where $\delta$ is a fixed non-square element in $\mathbb{F}_q$. The set of Witt types is closed under a natural addition: the sum of two Witt types, represented by forms $Q_1$ and $Q_2$ on spaces $V_1$ and $V_2$ is the equivalence class of $Q_1\oplus Q_2$ on $V_1\oplus V_2$.

The four orthogonal groups $\Or^{+}_{2n+1}(q),\Or^{-}_{2n+1}(q),\Or^{+}_{2n}(q)$ and $\Or^{-}_{2n}(q)$ are the subgroups of $\GL_{m}(q)$ (where $m$ is, accordingly, $2n+1$ or $2n$) preserving a non-degenerate quadratic form of Witt type $\mathbf{1},\delta,\mathbf{0}$ and $\omega$, respectively. We remark that the groups $\Or^{+}_{2n+1}(q)$ and $\Or^{-}_{2n+1}(q)$ are isomorphic,
and we usually denote them simply by $\Or_{2n+1}(q)$. However, Wall's parametrisation of conjugacy classes in orthogonal groups distinguishes between them, and therefore,
throughout this section, it will sometimes be convenient to maintain this distinction.

The description of the conjugacy classes of the finite orthogonal groups differs according to the parity of the characteristic of the defining field and, for clarity,
we choose to treat the two cases in separate sections.
Before addressing the odd characteristic case, we fix the common notation.
For $m\ge1$ let
\begin{multicols}{2}
\begin{itemize}[label={}]
    \item $d_m(q) \coloneqq \delta(\AO^+_m(q))+\delta(\AO^-_m(q))$,
    \item $\bar{d}_m(q)\coloneqq \delta(\AO^+_m(q))-\delta(\AO^-_m(q))$,
    \item $d'_m(q) \coloneqq \delta'(\Or^+_m(q))+\delta'(\Or^-_m(q))$,
    \item $\bar{d}'_m(q) \coloneqq \delta'(\Or^+_m(q))-\delta'(\Or^-_m(q))$,
      \item $u_m(q) \coloneqq \delta_p(\AO^+_m(q))+\delta_p(\AO^-_m(q))$,
    \item $\bar{u}_m(q)\coloneqq \delta_p(\AO^+_m(q))-\delta_p(\AO^-_m(q))$,
    \item $u'_m(q) \coloneqq \delta'_p(\Or^+_m(q))+\delta'_p(\Or^-_m(q))$,
    \item $\bar{u}'_m(q) \coloneqq \delta'_p(\Or^+_m(q))-\delta'_p(\Or^-_m(q))$.
\end{itemize}
\end{multicols}
Moreover, we set $d'_0(q)=u'_0(q)=\bar{d}'_0(q)=\bar{u}'_0(q)\coloneqq 1$. 
Note that the quantities related to the difference of the orthogonal groups are non-trivial only when the dimension $m$ is even.

In view of Equations \eqref{d'(X_m(q))} and \eqref{d'_p(X_m(q))}, we have, for every $m \ge 1$:
\begin{align}
\nonumber
d'_m(q)&=\frac{1}{|\Or^{+}_{m}(q)|}\sum_{a \in \Or^{+}_{m}(q)}q^{-\dim \ker(a-1)}+\frac{1}{|\Or^{-}_{m}(q)|}\sum_{a \in \Or^{-}_{m}(q)}q^{-\dim \ker(a-1)}, \\
\nonumber
d_m(q)&=2-d'_m(q), \\
\nonumber
 u'_m(q)&=\frac{1}{|\Or^{+}_{m}(q)|}\sum_{\substack{a \in \Or^{+}_{m}(q) \\ a \text{ unipotent}}}q^{-\dim \ker(a-1)}+\frac{1}{|\Or^{-}_{m}(q)|}\sum_{\substack{a \in \Or^{-}_{m}(q) \\ a \text{ unipotent}}}q^{-\dim \ker(a-1)}, \\
 \label{sum_unip_qeven}
 u_m(q)&=
 \begin{cases}\frac{1}{q^{\lfloor m/2\rfloor}(1/q^2)_{\lfloor m/2 \rfloor}}-u'_m(q), & \text{ if }q \text{ odd}, \\
 \frac{1}{q^m(1/q^2)_m}+\frac{1}{q^{m-1}(1/q^2)_{m-1}}-u'_m(q), & \text{ if } q \text{ even};
 \end{cases}
\end{align}
where the equalities for $u_m(q)$ follow from Lemma \ref{steinberg}\eqref{p_o_2m1}-\eqref{p_o2m}, as, for $q$ odd,
\begin{align*}
\Delta_u(\Or^{+}_{2m}(q))+\Delta_u(\Or^{-}_{2m}(q))=& \frac{q^{m^2-m}}{2\prod_{i=1}^{m-1}(q^{2i}-1)}\left(\frac{1}{q^m-1}+\frac{1}{q^m +1}\right)\\
=&\frac{q^{m^2}}{\prod_{i=1}^{m}(q^{2i}-1)}=\frac{q^{m^2}}{q^{m^2+m}(1/q^2)_m}=\frac{1}{q^m(1/q^2)_m},
\end{align*}
and
\begin{align*}
\Delta_u(\Or^{+}_{2m+1}(q))+\Delta_u(\Or^{-}_{2m+1}(q))=2\Delta_u(\Or^{+}_{2m+1}(q))=\frac{1}{q^m(1/q^2)_m};
\end{align*}
while, for $q$ even,
\begin{align*}
\Delta_u(\Or^{+}_{2m}&(q))+\Delta_u(\Or^{-}_{2m}(q))=\frac{q^{(m-1)^2}}{2\prod_{i=1}^{m-1}(q^{2i}-1)}\left( \frac{q^m+q^{m-1}-1}{q^m-1}+\frac{q^m+q^{m-1}+1}{q^m+1}\right)\\
=&\frac{q^{(m-1)^2}}{2\prod_{i=1}^{m}(q^{2i}-1)}\left(2q^{2m}+2q^{2m-1}-2\right)=\frac{q^{m^2-2m+1}}{q^{m^2+m}(1/q^2)_m}\left(q^{2m}+q^{2m-1}-1\right) \\
=&\frac{1+q-q^{1-2m}}{q^{m}(1/q^2)_m}=\frac{1}{q^m(1/q^2)_m}+\frac{q(1-q^{-2m)}}{q^m(1/q^2)_m}
=\frac{1}{q^m(1/q^2)_m}+\frac{1}{q^{m-1}(1/q^2)_{m-1}}.
\end{align*}
Moreover, we have
\begin{align}
\nonumber
\bar{d}'_{2m}(q)&=\frac{1}{|\Or^{+}_{2m}(q)|}\sum_{a \in \Or^{+}_{2m}(q)}q^{-\dim \ker(a-1)}-\frac{1}{|\Or^{-}_{2m}(q)|}\sum_{a \in \Or^{-}_{2m}(q)}q^{-\dim \ker(a-1)}, \\
\nonumber
\bar{d}_{2m}(q)&=-\bar{d}'_{2m}(q), \\
\nonumber
 \bar{u}'_{2m}(q)&=\frac{1}{|\Or^{+}_{2m}(q)|}\sum_{\substack{a \in \Or^{+}_{2m}(q) \\ a \text{ unipotent}}}q^{-\dim \ker(a-1)}-\frac{1}{|\Or^{-}_{2m}(q)|}\sum_{\substack{a \in \Or^{-}_{2m}(q) \\ a \text{ unipotent}}}q^{-\dim \ker(a-1)},\\
 \label{difference_u_ort}
 \bar{u}_{2m}(q)&=\frac{1}{q^{2m}(1/q^2)_{m}}-\bar{u}'_{2m}(q),
\end{align}
where the equality for $\bar{u}_{2m}(q)$ follows from Lemma \ref{steinberg}\eqref{p_o2m}, as, for $q$ odd,
\begin{align*}
\Delta_u(\Or^{+}_{2m}(q))-\Delta_u(\Or^{-}_{2m}(q))=& \frac{q^{m^2-m}}{2\prod_{i=1}^{m-1}(q^{2i}-1)}\left(\frac{1}{q^m-1}-\frac{1}{q^m +1}\right)\\
=&\frac{q^{m^2-m}}{\prod_{i=1}^{m}(q^{2i}-1)}=\frac{q^{m^2-m}}{q^{m^2+m}(1/q^2)_m}=\frac{1}{q^{2m}(1/q^2)_m}
\end{align*}
and, for $q$ even,
\begin{align*}
\Delta_u(\Or^{+}_{2m}&(q))+\Delta_u(\Or^{-}_{2m}(q))=\frac{q^{(m-1)^2}}{2\prod_{i=1}^{m-1}(q^{2i}-1)}\left( \frac{q^m+q^{m-1}-1}{q^m-1}-\frac{q^m+q^{m-1}+1}{q^m+1}\right)\\
=&\frac{q^{(m-1)^2}2q^{m-1}}{2\prod_{i=1}^{m}(q^{2i}-1)}=\frac{q^{m^2-m}}{q^{m^2+m}(1/q^2)_m}=\frac{1}{q^{2m}(1/q^2)_m}.
\end{align*}
Finally, we define the generating functions
\begin{align}
D_{\Or} \coloneqq \sum_{m=1}^{\infty}d_m(q)y^m, &&
U_{\Or} \coloneqq \sum_{m=1}^{\infty}u_m(q)y^m, &&
\bar{D}_{\Or} \coloneqq \sum_{m=1}^{\infty}\bar{d}_{2m}(q)y^{2m}, &&
\bar{U}_{\Or} \coloneqq \sum_{m=1}^{\infty}\bar{u}_{2m}(q)y^{2m},
\\
\label{D'_Or}
D'_{\Or}\coloneqq\sum_{m=0}^{\infty}d'_m(q)y^m, && U'_{\Or} \coloneqq\sum_{m=0}^{\infty}u'_m(q)y^m, &&\bar{D}'_{\Or} \coloneqq \sum_{m=1}^{\infty}\bar{d}'_{2m}(q)y^{2m}, &&
\bar{U}'_{\Or} \coloneqq \sum_{m=1}^{\infty}\bar{u}'_{2m}(q)y^{2m}.
\end{align}
In the sequel, we will often refer to \textit{the sum} or \textit{the difference of the orthogonal groups}, meaning the sum or the difference of the corresponding quantities for $\Or^{+}_{m}(q)$ and $\Or^{-}_{m}(q)$, as in all the definitions above.
\section{Affine orthogonal groups in odd characteristic}
In this section, $q$ is always assumed to be odd.
We follow, as before, the exposition in \cite[Sec. 4.3]{Fulman_cycle_index}. Let $\phi$ be a polynomial over $\mathbb{F}_q$ with non-zero constant term. 
We consider the following combinatorial data: for each monic, non-constant, irreducible polynomial $\phi \neq z\pm 1$ assign a partition $\lambda_{\phi}$ of some non-negative integer $|\lambda_{\phi}|$. For $\phi$ equal to $z-1$ or $z+1$ associate an \textit{orthogonal signed partition} $\lambda^{\pm}_{\phi}$, meaning a partition of a non-negative integer $|\lambda^{\pm}_{\phi}|$ such that all even parts occur with even multiplicity, and each odd $i>0$ is assigned a sign (see Figure \ref{fig:orthogonal_signed_partition}). For such $\phi=z\pm1$ and odd $i>0$, let $\Theta_i(\lambda^{\pm}_{\phi})$ denote the Witt type of the orthogonal group on a vector space of dimension $m_i(\lambda^{\pm}_{\phi})$, with the sign determined by the choice assigned to the part $i$. Recall, from the discussion in the symplectic case, the definition of $\bar{\phi}$ in Eq. \eqref{bar_phi}. 

\begin{figure}
\floatbox[{\capbeside\thisfloatsetup{capbesideposition={right,bottom},capbesidewidth= 12cm}}]{figure}[\FBwidth]
{\caption{\textbf{Example of an orthogonal signed partition:} here the $+$ corresponds to the part of size 3 and the $-$ corresponds to the parts of size 5 and 1.}\label{fig:orthogonal_signed_partition}}
{\includegraphics[]{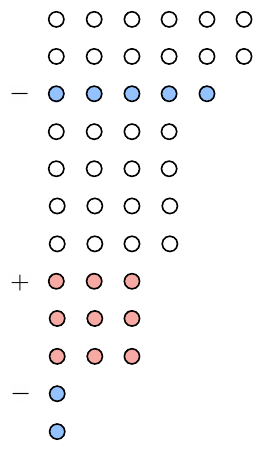}}
\end{figure}

\begin{thm}[\cite{Fulman_cycle_index}, Theorem 13]
\label{Wall_orthogonal}
The data $\lambda^{\pm}_{z-1},\lambda^{\pm}_{z+1},\lambda_{\phi}$ represent a conjugacy class of some orthogonal group $\Or^{\epsilon}_{m}(q)$, $\epsilon \in \{+,-,\circ\}$, if and only if
\begin{enumerate}[label=(\roman*)]
    \item $|\lambda_z|=0,$
    \item $\lambda_{\phi}=\lambda_{\bar{\phi}}$,
    \item $\sum_{\phi=z\pm1}|\lambda_{\phi}^{\pm}|+\sum_{\phi \neq z \pm1}|\lambda_{\phi}|\deg (\phi)=m$.
\end{enumerate}

This partition data represents the conjugacy class of exactly one orthogonal group $\Or^{\epsilon}_m(q)$, where the sign $\epsilon$ is uniquely determined by the condition that the group arises as the stabiliser of a form of Witt type
$$\sum_{\phi=z\pm 1}\sum_{i\text{ odd}}\Theta_i(\lambda^{\pm}_{\phi})+\sum_{\phi \neq z\pm 1}\sum_{i \ge1}i m_i(\lambda_{\phi})\omega.$$
\end{thm}
If $a \in \Or^{\pm}_{m}(q)$, we denote by $\lambda_{\phi}(a)$ (resp. $\lambda^{\pm}_{\phi}(a)$, if $\phi=z\pm 1$) the partition (resp. orthogonal signed partition) associated with $a$ corresponding to the irreducible polynomial~$\phi$.
For every $m \ge 1$, and $\epsilon \in \{\pm\}$ define
$$Z_{\Or^{\epsilon}_m(q)} \coloneqq 
\frac{1}{|\Or^{\epsilon}_{m}(q)|}\sum_{a \in \Or^{\epsilon}_{m}(q)}\prod_{\phi=z\pm1}x_{\phi,\lambda^{\pm}_{\phi}(a)}\prod_{\phi\neq z,z\pm1}x_{\phi,\lambda_{\phi}(a)},$$
Then, the \textit{cycle index for the sum of the orthogonal groups} is defined as
\begin{align*}
Z_{\Or} \coloneqq 1 + \sum_{m=1}^{\infty} Z_{\Or^{+}_m(q)} y^{m} + Z_{\Or^{-}_m(q)} y^{m}.
\end{align*}
Define
\begin{align*}
A_i(q,\lambda^{\pm})&\coloneqq
\begin{cases}
|\Or_{m_i(\lambda^{\pm})}(q)| & \text{if } i \equiv 1 \pmod 2, \\
    q^{-\frac{1}{2}m_i(\lambda^{\pm})}|\Sp_{m_i(\lambda^{\pm})}(q)| & \text{if } i\equiv 0 \pmod 2.
\end{cases}\\
\end{align*}
and
\begin{equation}
\label{c_O}
c_{\Or,q}(\lambda^{\pm})\coloneqq q^{\sum_i((\lambda^{\pm})'_i)^2-\sum_im_i(\lambda^{\pm})^2}\prod_iA_i(q,\lambda^{\pm}).
\end{equation}
From \cite[Theorem 14]{Fulman_cycle_index}, $Z_{\Or}$ admits the following factorisation.
\begin{align}
\label{factorisation_ci_sum_ort}
Z_{\Or}=\prod_{\phi=z\pm1}&\left(\sum_{\lambda^{\pm}}x_{\phi,\lambda^{\pm}}\frac{y^{|\lambda^{\pm}|}}{c_{\Or,q}(\lambda^{\pm})}\right)\prod_{\substack{\phi=\bar{\phi}\\\phi \neq z\pm1}}\sum_{\lambda}x_{\phi,\lambda}\frac{(-y^{\deg \phi})^{|\lambda|}}{c_{\GL,-q^{\deg \phi/2}}(\lambda)} \nonumber\\
\cdot&
\prod_{\substack{\{\phi,\bar{\phi}\}\\\phi \neq \bar{\phi}}}\sum_{\lambda}x_{\phi,\lambda}x_{\bar{\phi},\lambda}
\frac{y^{2|\lambda|\deg \phi}}{c_{\GL,q^{\deg \phi}}(\lambda)}.
\end{align}
Note that, in order to derive formulas for $\delta({\AO^{+}_{m}(q)})$ and  $\delta({\AO^{-}_{m}(q)})$ when $m$ is even, we also need a cycle index for the \textit{difference} of the orthogonal groups. To this end, for the odd-characteristic case, we follow Britnell's work \cite{Britnell_difference_orth}, which treats separately the cases $q \equiv 1 \pmod 4$ and $q \equiv 3\pmod 4$.

For an orthogonal signed partition $\lambda^{\pm}$, and for odd $s$, let $\sign(s)$ denote the sign corresponding to the parts of $\lambda^{\pm}$ of size $s$. Define, for  $\phi \in {z\pm 1}$,
\begin{equation*}
 \tau_{\phi,s}\coloneqq
\begin{cases}
    \sign(s) & \text{ if }q^{m_s} \equiv 1 \pmod 4, \\
    i\sign(s)&  \text{ if }q^{m_s} \equiv 3 \pmod 4,
\end{cases}
\end{equation*}
 where $i$ denotes the imaginary unity, and 
 \begin{equation}
 \label{tau_phi}
\tau_{\phi}(\lambda^{\pm}) \coloneqq \prod_{s \text{ odd}}\tau_{\phi,s}.
 \end{equation}

Now, define $Z_{\Or}^{[1]}$ to be the power series obtained from $Z_{\Or}$ under the substitutions
\begin{align}
x_{\phi, \lambda^{\pm}}\mapsto \tau_{\phi}(\lambda^{\pm})x_{\phi, \lambda^{\pm}},&& x_{\phi, \lambda} \mapsto (-1)^{|\lambda|}x_{\phi,\lambda}.
\end{align}
In \cite[p. 580]{Britnell_difference_orth} it is shown that, if $q \equiv 1 \pmod 4$, then
\begin{equation}
Z_{\Or}^{[1]}=1+\sum_{m=1}^{\infty} Z_{\Or^{+}_{m}(q)}y^m-Z_{\Or^{-}_{m}(q)}y^m
\end{equation}
is the cycle index for the \text{difference} of the orthogonal groups, and if $q \equiv 3 \pmod 4$, then
\begin{equation}
Z_{\Or}^{[1]}=1+\sum_{n=1}^{\infty}i(Z_{\Or^{+}_{2n-1}(q)}y^{2n-1}-Z_{\Or^{-}_{2n-1}(q)}y^{2n-1})+Z_{\Or^{+}_{2n}(q)}y^{2n}-Z_{\Or^{-}_{2n}(q)}y^{2n}.
\end{equation}
Finally, observe that, using Eq. \eqref{factorisation_ci_sum_ort}, $Z^{[1]}_{\Or}$ factorises as follows:
\begin{align}
\label{factorisation_ci_difference_ort}
Z^{[1]}_{\Or}=\prod_{\phi=z\pm1}&\left(\sum_{\lambda^{\pm}}\tau_{\phi,\lambda^{\pm}}
x_{\phi,\lambda^{\pm}}\frac{y^{|\lambda^{\pm}|}}{c_{\Or,q}(\lambda^{\pm})}\right)\prod_{\substack{\phi=\bar{\phi}\\\phi \neq z\pm1}}\sum_{\lambda}x_{\phi,\lambda}\frac{y^{|\lambda|\deg \phi}}{c_{\GL,-q^{\deg \phi/2}}(\lambda)} \nonumber \nonumber \\
\cdot&
\prod_{\substack{\{\phi,\bar{\phi}\}\\\phi \neq \bar{\phi}}}\sum_{\lambda}x_{\phi,\lambda}x_{\bar{\phi},\lambda}
\frac{y^{2|\lambda|\deg \phi}}{c_{\GL,q^{\deg \phi}}(\lambda)}.
\end{align}

\subsection{Derangements of $p$-power order: proof of Theorem \ref{p_main}\eqref{p_Orthogonal_2m+1}-\eqref{p_Orthogonal_2modd}}
Let $a \in \Or^{\epsilon}_{m}(q)$ be unipotent. According to Theorem \ref{Wall_orthogonal}, the conjugacy class of $a$ corresponds to an orthogonal signed  partition $\lambda^{\pm}_{z-1}(a)$ of size $m$. Conversely, if $\lambda^{\pm}$ is an orthogonal-signed partition of size $m$, then $\lambda^{\pm}$ determines a unique conjugacy class of a unipotent element $a \in \Or^{\epsilon}_{m}(q)$, where the sign $\epsilon$ is uniquely specified by the condition that the group arises as the stabiliser of a form of Witt type
$\sum_{i\text{ odd}}\Theta_i(\lambda^{\pm}).$
We say that the unipotent element $a \in \Or^{\epsilon}_{m}(q)$ has rational canonical form of \textit{type} $\lambda$ if the underlying (unsigned) partition of $\lambda^{\pm}_{z-1}(a)$, which describes the $\GL_{m}(q)$-rational canonical form of $a$,
is equal to $\lambda$. In this case, we write $u(\lambda^{\pm}_{z-1}(a))=\lambda$.

Before deriving, as in the previous cases, the expressions of $u_m(q)$ and 
$\bar u_m(q)$ from the cycle indices, we record the analogue of Lemma~\ref{lemma_prop_unip_sp} for the sum and the difference of the orthogonal groups, which will be useful in deriving these expressions.

\begin{lemma}
\label{unip_orth_lambda}
Let $\lambda$ be a partition of $m$ where all even parts have even multiplicity, and let
$\Delta_{u,\lambda}(\Or^{\epsilon}_m(q))$ be the proportion of unipotent elements in $\Or^{\epsilon}_m(q)$ having rational canonical form of type $\lambda$. Then
\begin{enumerate}[label=\roman*.]
    \item 
$\displaystyle \Delta_{u,\lambda}(\Or^{+}_m(q))+\Delta_{u,\lambda}(\Or^{-}_m(q))=\frac{1}{q^{\frac{1}{2}\sum_i (\lambda'_i)^2-\frac{1}{2}o(\lambda)}\prod_i(1/q^2)_{\lfloor\frac{m_i(\lambda)}{2}\rfloor}};$
\item 
$\displaystyle
\Delta_{u,\lambda}(\Or^{+}_m(q))-\Delta_{u,\lambda}(\Or^{-}_m(q))=\begin{cases}0, &\text{if }\exists i \text{ such that }m_i(\lambda) \text{ is odd;} \\
\frac{1}{q^{\frac{1}{2}\sum_i (\lambda'_i)^2}\prod_i(1/q^2)_{\frac{m_i(\lambda)}{2}}}, &\text{if all }m_i(\lambda) \text{ are even}.\end{cases}$
\end{enumerate}
\begin{proof}
We start by considering $\Delta_{u,\lambda}(\Or^{+}_m(q))+\Delta_{u,\lambda}(\Or^{-}_m(q))$.
     Recall the definition of $c_{\Or,q}(\lambda^{\pm})$ in Eq.~\eqref{c_O}. 
   It follows from the definition of the cycle index $Z_{\Or}$, that $\Delta_{u,\lambda}(\Or^{+}_m(q))+\Delta_{u,\lambda}(\Or^{-}_m(q))$ is the coefficient of $y^m$ in the factor
$$\sum_{\lambda^{\pm}}x_{z-1,\lambda^{\pm}}\frac{y^{|\lambda|}}{c_{\Or,q}(\lambda^{\pm})}$$ of
   $Z_{\Or}$, when we set the variables $x_{z-1,\lambda^{\pm}}$ equal to 1 if $u(\lambda^{\pm})=\lambda$, and equal to 0 otherwise. 
   
    In what follows, after the first step, we will simply write $m_i$ for $m_i(\lambda)$ or $m_i(\lambda^{\pm})$.
    We have
    \allowdisplaybreaks{
    \begin{align*}
    &\Delta_{u,\lambda}(\Or^{+}_m(q))+\Delta_{u,\lambda}(\Or^{-}_m(q))=\sum_{\substack{\lambda^{\pm}\\u(\lambda^{\pm})=\lambda}}\frac{1}{c_{\Or,q}(\lambda^{\pm})} \\&= \frac{1}{q^{\frac{1}{2}\sum_{i}(\lambda'_i)^2-\frac{1}{2}\sum_i m_i(\lambda)^2}} \sum_{\substack{\lambda^{\pm}\\u(\lambda^{\pm})=\lambda}}\frac{1}{\prod_{i \text{ even}}q^{-\frac{m_i(\lambda)}{2}}|\Sp_{m_i(\lambda)}(q)| \prod_{i \text{ odd}}|\Or_{m_i(\lambda^{\pm})}(q)|}\\
   &= \frac{1}{q^{\frac{1}{2}\sum_{i}(\lambda'_i)^2}} \sum_{\substack{\lambda^{\pm}\\u(\lambda^{\pm})=\lambda}}\frac{1}{\displaystyle\prod_{i \text{ even}}q^{\frac{-m_i^2-2m_i}{4}}\prod_{l=1}^{m_i/2}(q^{2l}-1)\prod_{\substack{i \text{ odd}\\m_i \text{ odd}}}2q^{\frac{-m_i^2-2m_i+1}{4}}\prod_{l=1}^{(m_i-1)/2}(q^{2l}-1)}\\ \cdot & \frac{1}{\displaystyle \prod_{\substack{i \text{ odd}\\m_i \text{ even}}}2q^{\frac{-m_i^2-2m_i}{4}}\prod_{l=1}^{m_i/2-1}(q^{2l}-1)\prod_{\substack{i \text{ odd} \\ m_i \text{ even} \\ \text{sign}(i)=+}}(q^{\frac{m_i}{2}}-1)\prod_{\substack{i \text{ odd} \\ m_i \text{ even} \\ \text{sign}(i)=-}}(q^{\frac{m_i}{2}}+1)}.
\end{align*}}
Let $\bar{o}(\lambda)$ denote the number of distinct odd parts of $\lambda$. Then, if $\lambda$ is a fixed partition in which all even parts have even multiplicity, there are exactly $2^{\bar{o}(\lambda)}$ orthogonal signed partitions $\lambda^{\pm}$ with underlying unsigned partition $\lambda$.  Therefore
{\allowdisplaybreaks
\begin{align*}
\Delta_{u,\lambda}(\Or^{+}_m(q))+\Delta&_{u,\lambda}(\Or^{-}_m(q))
=\frac{1}{q^{\frac{1}{2}\sum_{i}(\lambda'_i)^2}}  \cdot \frac{2^{\bar{o}(\lambda)}\prod_{\substack{i \text{ odd}\\ m_i \text{ even}}}q^{\frac{m_i}{2}}}{\displaystyle\prod_{i \text{ even}}q^{\frac{-m_i^2-2m_i}{4}}\prod_{l=1}^{m_i/2}(q^{2l}-1)}\\ \cdot & \frac{1}{\displaystyle\prod_{\substack{i \text{ odd}\\m_i \text{ odd}}}q^{-\frac{m_i}{2}}2q^{\frac{-m_i^2+1}{4}}\prod_{l=1}^{(m_i-1)/2}(q^{2l}-1)\displaystyle \prod_{\substack{i \text{ odd}\\m_i \text{ even}}}2q^{\frac{-m_i^2-2m_i}{4}}\prod_{l=1}^{m_i/2}(q^{2l}-1)}\\
=\frac{1}{q^{\frac{1}{2}\sum_{i}(\lambda'_i)^2}}  &\cdot \frac{q^{\frac{o(\lambda)}{2}}}{\displaystyle\prod_{i \text{ even}}q^{\frac{-m_i^2-2m_i}{4}}\prod_{l=1}^{m_i/2}(q^{2l}-1)\prod_{\substack{i \text{ odd}\\m_i \text{ odd}}}q^{\frac{-m_i^2+1}{4}}\prod_{l=1}^{(m_i-1)/2}(q^{2l}-1)}\\ \cdot & \frac{1}{\displaystyle \prod_{\substack{i \text{ odd}\\m_i \text{ even}}}q^{\frac{-m_i^2-2m_i}{4}}\prod_{l=1}^{m_i/2}(q^{2l}-1)}\\
=\frac{1}{q^{\frac{1}{2}\sum_{i}(\lambda'_i)^2}}  &\cdot \frac{q^{\frac{o(\lambda)}{2}}}{\displaystyle\prod_{i \text{ even}} (1/q^2)_{\frac{m_i}{2}}\prod_{\substack{i \text{ odd}\\m_i \text{ odd}}}(1/q^2)_{\frac{m_i-1}{2}}\prod_{\substack{i \text{ odd}\\m_i \text{ even}}}(1/q^2)_{\frac{m_i}{2}}},
\end{align*}} 
hence the conclusion.

The formula for $\Delta_{u,\lambda}(\Or^{+}_m(q)-\Delta_{u,\lambda}(\Or^{-}_m(q))$ follows from similar computations, recalling the definition of $\tau_{\phi}(\lambda^{\pm})$ in Eq. \eqref{tau_phi} and observing that if $q \equiv 1 \pmod 4$, then $$\tau_{\phi}(\lambda^{\pm})=\prod_{\substack{i \text{ odd} \\ \text{sign}(i)=-}}(-1),$$

while if $q \equiv 3 \pmod 4$, then $$\tau_{\phi}(\lambda^{\pm})=\prod_{\substack{s \text{ even} \\ \text{sign}(s)=-}}(-1)\prod_{\substack{s \text{ odd} \\ \text{sign}(s)=-}}(-i).$$
The presence of the factor $\tau_{\phi}(\lambda^{\pm})$ forces the result to be non-zero if and only if the partitions $\lambda$ has all parts of even multiplicity: indeed, if there exists any $i$ such that $m_i$ is odd, the contributions cancel out due to the sign choices. With this observation, the result follows similarly to the previous case and does not depend on the congruence class of $q\mod 4$.
    \end{proof}
\end{lemma}

As a corollary, we easily obtain exact formulas for the proportion of unipotent elements having fixed rational canonical form type, in each of the orthogonal groups $\Or^{\epsilon}_{m}(q)$.
\begin{coro}
Let $\lambda$ be a partition of $m$. The proportion $\Delta_{u,\lambda}(\Or^{\epsilon}_m(q))$ of unipotent elements in $\Or^{\epsilon}_{m}(q)$ having rational canonical form of type $\lambda$ is 0, unless all even parts of $\lambda$ occur with even multiplicity. 
Assume now that every even part of $\lambda$ occurs with even multiplicity. 
\begin{itemize}
\item If $m$ is odd, or if $m$ is even and there exists $i$ such that $m_i(\lambda)$ is odd, then
\[
\Delta_{u,\lambda}(\Or_m^{\varepsilon}(q))
=
\frac{1}{
2\,q^{\frac12\sum_i (\lambda_i')^2-\frac12 o(\lambda)}
\displaystyle\prod_i (1/q^2)_{\lfloor m_i(\lambda)/2 \rfloor}
}.
\]

\item If $m$ is even and $m_i(\lambda)$ is even for all $i$, then
\[
\Delta_{u,\lambda}(\Or_m^{\pm}(q))
=
\frac{q^{\frac{o(\lambda)}{2}} \pm 1}{
2\,q^{\frac12\sum_i (\lambda_i')^2}
\displaystyle\prod_i (1/q^2)_{m_i(\lambda)/2}
}.
\]
\end{itemize}
\end{coro}
\begin{proof}
By Wall's combinatorial description of the conjugacy classes in the orthogonal groups $\Or^{\epsilon}_m(q)$, the rational canonical form of a unipotent element $a \in \Or^{\epsilon}_{m}(q)$ uniquely corresponds to an orthogonal-signed partition $\lambda^{\pm}_{z-1}(a)$ of size $m$. In particular, the underlying partition $\lambda=u(\lambda^{\pm}_{z-1}(a))$ has all even parts occurring with even multiplicity. This proves that the proportion is $0$ if this requirement is not satisfied. If all even parts of $\lambda$ have even multiplicity, the formulas immediately follow by Lemma \ref{unip_orth_lambda}.
\end{proof}

\begin{lemma}
\label{u_m(q)_ort}
The following hold:
\begin{enumerate}[label=\roman*.]
    \item 
$\displaystyle u_m(q)=\sum_{|\lambda^{\pm}|=m}\frac{1-q^{-(\lambda^{\pm})'_1}}{c_{\Or,q}(\lambda^{\pm})}=\sum_{\substack{|\lambda|=m\\i \text{ even}\Rightarrow m_i(\lambda) \text{ even}}}\frac{1-q^{-\lambda'_1}}{q^{\frac{1}{2}\sum_i (\lambda'_i)^2-\frac{1}{2}o(\lambda)}\prod_i(1/q^2)_{\lfloor\frac{m_i(\lambda)}{2}\rfloor}};$
\item 
$\displaystyle
\bar{u}_{2m}(q)=\sum_{|\lambda^{\pm}|={2m}}\tau_{z-1}(\lambda^{\pm})\frac{1-q^{-(\lambda^{\pm})'_1}}{c_{\Or,q}(\lambda^{\pm})}=\sum_{\substack{|\lambda|=2m\\ \text{all }m_i(\lambda) \text{ even}}}\frac{1-q^{-\lambda'_1}}{q^{\frac{1}{2}\sum_i (\lambda'_i)^2}\prod_i(1/q^2)_{\frac{m_i(\lambda)}{2}}}.$
\end{enumerate}
\end{lemma}
\begin{proof}
In both statements, the first equality follows immediately from the definition of the cycle index $Z_{\Or}$. The second one follows by applying Lemma \ref{unip_orth_lambda}.
\end{proof}

The following identities for $u_{2m+1}(q)$ and ${u}_{2m}(q)$ where conjectured by the author in the first version of this paper and later proved by Fulman and Stanton in \cite{FulmanStanton25}.
\begin{thm*}[Restatement of Theorem $\ref{conj_identities}\eqref{conj_intro_ort_2m+1}-\eqref{conj_intro_ort_2m}$]
The following hold.
\begin{align*}
u_{2m+1}(q)&=\sum_{\substack{|\lambda|=2m+1\\i \text{ even} \\ \Rightarrow m_i \text{ even}}}\frac{1-q^{-\lambda'_1}}{q^{\frac{1}{2}\sum_i (\lambda'_i)^2-\frac{1}{2}o(\lambda)}\prod_i(1/q^2)_{\lfloor\frac{m_i}{2}\rfloor}}=\frac{1}{q^m(1/q^2)_{m}}+\frac{1}{q^{m+1}}\sum_{i=0}^{m}\frac{(-1)^{i-1}}{q^{i(i+1)}(1/q^2)_{m-i}},\\
u_{2m}(q)&=\sum_{\substack{|\lambda|=2m\\i \text{ even}\Rightarrow m_i \text{ even}}}\frac{1-q^{-\lambda'_1}}{q^{\frac{1}{2}\sum_i (\lambda'_i)^2-\frac{1}{2}o(\lambda)}\prod_i(1/q^2)_{\lfloor\frac{m_i}{2}\rfloor}}
=\frac{1}{q^m}\sum_{i=1}^{m}\frac{(-1)^{i-1}}{q^{i(i-1)}(1/q^2)_{m-i}}.
\end{align*}
\end{thm*}

In the case of the difference of the orthogonal groups, the analogue of Theorem \ref{conj_identities}\eqref{conj_intro_ort_2m} can be deduced directly from the corresponding result of \cite{SpigaAGL} on $\AGL_m(q)$. The author would like to thank Tewodros Amdeberhan for helpful comments, which contributed to the proof of this case. 
\begin{prop}
\label{prop_bar{u}}
$$\bar{u}_{2m}(q)=\sum_{\substack{|\lambda|=2m\\ \text{all }m_i \text{ even}}}\frac{1-q^{-\lambda'_1}}{q^{\frac{1}{2}\sum_i (\lambda'_i)^2}\prod_i(1/q^2)_{\frac{m_i}{2}}}=\frac{1}{q^{2m}}\sum_{i=1}^{m}\frac{(-1)^{i-1}}{q^{i(i-1)}(1/q^2)_{m-i}}.$$
\begin{proof}
By Lemma \ref{u_m(q)_ort},
$$\bar{u}_{2m}(q)=\sum_{\substack{|\lambda|=2m\\ \text{all }m_i \text{ even}}}\frac{1-q^{-\lambda'_1}}{q^{\frac{1}{2}\sum_i (\lambda'_i)^2}\prod_i(1/q^2)_{\frac{m_i}{2}}}.$$
This sum ranges over partitions $\lambda$ of $2m$ with even multiplicities, which correspond to partitions $\mu$ of $m$ via $\lambda=2\mu$. In this correspondence, $\lambda'_i=2\mu'_i$, so the expression becomes
$$\bar{u}_{2m}(q)=\sum_{|\mu|=m}\frac{1-q^{-2\mu'_1}}{q^{2\sum_i (\mu'_i)^2}\prod_i(1/q^2)_{m_i(\mu)}}.$$
By Theorem \ref{thmG}, applied with $x=1/q^2$, we obtain the desired conclusion.
\end{proof}
\end{prop}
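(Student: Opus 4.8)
The plan is to reduce everything to the generating-function identity already recorded in Theorem \ref{thmG}. The first equality in the statement is not new: it is exactly the formula for $\bar{u}_{2m}(q)$ proved in Lemma \ref{u_m(q)_ort}.ii, so I would simply cite that lemma and devote the work to the second equality, i.e. to evaluating
$$\sum_{\substack{|\lambda|=2m\\ \text{all }m_i \text{ even}}}\frac{1-q^{-\lambda'_1}}{q^{\frac{1}{2}\sum_i (\lambda'_i)^2}\prod_i(1/q^2)_{\frac{m_i}{2}}}$$
in closed form.

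The structural heart of the argument is a bijection between the index set of this sum and the set of all partitions of $m$. A partition $\lambda$ of $2m$ in which every multiplicity $m_i(\lambda)$ is even is precisely one of the form $\lambda=\mu\sqcup\mu$, obtained by repeating each part of a partition $\mu$ of $m$; the map $\lambda\mapsto\mu$ is the desired bijection. First I would record the arithmetic of this correspondence. Doubling every multiplicity doubles the number of parts of each size, so $\lambda'_i=2\mu'_i$ for all $i$ and $m_i(\lambda)=2m_i(\mu)$. Consequently $\lambda'_1=2\mu'_1$, $\tfrac12\sum_i(\lambda'_i)^2=2\sum_i(\mu'_i)^2$, and $\prod_i(1/q^2)_{m_i(\lambda)/2}=\prod_i(1/q^2)_{m_i(\mu)}$.

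Substituting these identities turns the sum over $\lambda$ into
$$\sum_{|\mu|=m}\frac{1-q^{-2\mu'_1}}{q^{2\sum_i(\mu'_i)^2}\prod_i(1/q^2)_{m_i(\mu)}},$$
and I would then recognise this as the specialisation $G(1/q^2)$ of the function $G(x)$ of Theorem \ref{thmG}: under $x=1/q^2$ one has $x^{\sum_i(\mu'_i)^2}=q^{-2\sum_i(\mu'_i)^2}$, $1-x^{\mu'_1}=1-q^{-2\mu'_1}$, and $(x)_{m_i(\mu)}=(1/q^2)_{m_i(\mu)}$, so the two sums agree term by term. Feeding $x=1/q^2$ into the closed form $G(x)=x^m\sum_{i=1}^m(-1)^{i-1}x^{i(i-1)/2}/(x)_{m-i}$ and simplifying the powers of $q$ then gives $\frac{1}{q^{2m}}\sum_{i=1}^m\frac{(-1)^{i-1}}{q^{i(i-1)}(1/q^2)_{m-i}}$, which is the right-hand side.

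Once the bijection is in place the remainder is bookkeeping, so the single delicate point is correctly transporting the conjugate partition and the multiplicities through the doubling map; it is here that one must verify $\lambda'_i=2\mu'_i$ rather than the relation that would arise from doubling \emph{parts} instead of \emph{multiplicities}, since only the former makes the exponent $\tfrac12\sum_i(\lambda'_i)^2$ collapse to $2\sum_i(\mu'_i)^2$ and thereby match the $x=1/q^2$ specialisation of Theorem \ref{thmG}.
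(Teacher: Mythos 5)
Your proposal is correct and follows essentially the same route as the paper: cite Lemma \ref{u_m(q)_ort}.ii for the first equality, reduce the sum via the multiplicity-halving bijection between even-multiplicity partitions of $2m$ and partitions of $m$ (so that $\lambda'_i=2\mu'_i$), and then specialise Theorem \ref{thmG} at $x=1/q^2$. Your explicit insistence that the correspondence is $\lambda=\mu\sqcup\mu$ (doubled multiplicities, not doubled parts) is in fact a welcome clarification of the paper's shorthand ``$\lambda=2\mu$'', which relies on exactly the relation $\lambda'_i=2\mu'_i$ you verify.
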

\begin{proof}[Proof of Theorem \ref{p_main}\eqref{p_Orthogonal_2m+1}-\eqref{p_Orthogonal_2modd}]
For affine orthogonal groups of odd dimension, we obtain
$$\delta_p(\AO_{2m+1}(q))=\frac{u_{2m+1}(q)}{2}=\frac{1}{2q^m(1/q^2)_{m}}+\frac{1}{2q^{m+1}}\sum_{i=0}^{m}\frac{(-1)^{i-1}}{q^{i(i+1)}(1/q^2)_{m-i}}.
$$
For affine orthogonal groups of even dimension, we have
$$\delta_p(\AO^{\pm}_{2m}(q))=\frac{u_{2m}(q)+\bar{u}_{2m}(q)}{2}=\frac{q^m\pm1}{2q^{2m}}\sum_{i=1}^{m}\frac{(-1)^{i-1}}{q^{i(i-1)}(1/q^2)_{m-i}}.$$
\end{proof}
\subsection{Derangements: proof of Theorem \ref{main}\eqref{Orthogonal_odd}-\eqref{Orthogonal_even} }
Define
\begin{align}
T_{\Or} &\coloneqq \sum_{m=0}^{\infty}\frac{y^m}{q^{\lfloor m/2 \rfloor}(1/q^2)_{\lfloor m/2 \rfloor}}, \\
\label{bar{T}}
\bar{T}_{\Or}&\coloneqq \sum_{m=0}^{\infty}\frac{y^{2m}}{q^{2m}(1/q^2)_m}.
\end{align}
Recall the definitions of $U'_{\Or}$ and $D'_{\Or}$ in Eq. \eqref{D'_Or} and of $T_{\Sp}$ in Eq. \eqref{T_Sp}. The following factorisations for $D'_{\Or}$ and $\bar{D}'_{\Or}$ hold.
\begin{lemma}
We have
\label{fact_D'_O}
\begin{align*}
D'_{\Or}&=T_{\Sp}^{-1}(1-y)^{-1}U'_{\Or}, \\
\bar{D}'_{\Or}&=\bar{T}_{\Or}^{-1}\bar{U}'_{\Or}.
\end{align*}
\begin{proof}
We proceed as in the proof of Lemma \ref{fact_D'_u}. 
To prove the factorisation for $D'_{\Or}$, observe that, setting all variables $x_{\phi,\lambda^{\pm}}$, $x_{\phi, \lambda}$ and $x_{\bar{\phi}, \lambda}$ equal to 1 in $Z_{\Or}$ yields $\frac{1+y}{1-y}$, and
if we replace the variables $x_{z-1,\lambda^{\pm}}$ by 1 in the factor
$$\sum_{\lambda^{\pm}}x_{z-1,\lambda^{\pm}}\frac{y^{|\lambda^{\pm}|}}{c_{\Or,q}(\lambda^{\pm})}$$
of $Z_{\Or}$, the coefficient of $y^m$ in this sum becomes the sum of the proportions of unipotent elements in $\Or^{+}_{m}(q)$ and $\Or^{-}_{m}(q)$, which, by Eq. \eqref{sum_unip_qeven}, equals $\frac{1}{q^{\lfloor m/2 \rfloor}(1/q^2)_{\lfloor m/2 \rfloor}}$. Hence,
$$\sum_{\lambda^{\pm}}\frac{y^{|\lambda^{\pm}|}}{c_{\Or,q}(\lambda^{\pm})}=\sum_{m=0}^{\infty}\frac{1}{q^{\lfloor m/2 \rfloor}(1/q^2)_{\lfloor m/2 \rfloor}}y^m=T_{\Or},$$
and 
\begin{equation}
\label{e1}
D'_{\Or}=T_{\Or}^{-1}\frac{1+y}{1-y}U'_{\Or}.
\end{equation}
Now, we require a celebrated theorem of Euler \cite{Euler}, which states that
\begin{equation}
\label{euler}
\sum_{m=0}^{\infty}\frac{y^m}{q^m(1/q)_m}=\prod_{i=1}^{\infty}\left(1-\frac{y}{q^i} \right)^{-1}.
\end{equation}
We apply Eq. \eqref{euler} to $T_{\Or}$, obtaining:
\begin{align}
\label{e2}
T_{\Or}&=\sum_{h=0}^{\infty}\frac{y^{2h}}{q^h(1/q^2)_h}+\sum_{j=0}^{\infty}\frac{y^{2j+1}}{q^j(1/q^2)_j}=(1+y)\sum_{i=0}^{\infty}\frac{(q y^2)^i}{q^{2i}(1/q^2)_i}\\ &=(1+y)\prod_{i=1}^{\infty}\left(1 -\frac{qy^2}{q^{2i}}\right)^{-1}=(1+y)T_{\Sp}.
\end{align}
Combining Eq.s \eqref{e1} and \eqref{e2}, and using the factorisation of $Z_{\Or}$ in Eq. \eqref{factorisation_ci_sum_ort}, we conclude that 
$$D'_{\Or}=T^{-1}_{\Sp}(1-y)^{-1}U'_{\Or},$$
as wanted.

Similarly, to prove the statement for $\bar{D}'_{\Or}$, observe that, replacing all variables $x_{\phi,\lambda^{\pm}}$, $x_{\phi, \lambda}$ and $x_{\bar{\phi}, \lambda}$ by 1 in $Z^{[1]}_{\Or}$ we obtain 1, and if we set
all variables $x_{z-1,{\lambda^{\pm}}}$ equal to 1 in the factor
$$\sum_{\lambda^{\pm}}\tau_{z-1,\lambda^{\pm}}x_{z-1,\lambda^{\pm}}\frac{y^{|\lambda^{\pm}|}}{c_{\Or,q}(\lambda^{\pm})}$$
of $Z^{[1]}_{\Or}$, the coefficient of $y^{2m}$ in $\sum_{\lambda^{\pm}}\tau_{z-1,\lambda^{\pm}}\frac{y^{|\lambda^{\pm}|}}{c_{\Or,q}(\lambda^{\pm})}$ equals the difference between the proportions of unipotent elements in $\Or^+_{2m}(q)$ and  $\Or^-_{2m}(q)$, which, by Eq. \eqref{difference_u_ort}, is  $\frac{1}{q^{2m}(1/q^2)_m}$. Thus,
\begin{equation*}
\sum_{\lambda^{\pm}}\tau_{z-1,\lambda^{\pm}}\frac{y^{|\lambda^{\pm}|}}{c_{\Or,q}(\lambda^{\pm})}=\sum_{m=0}^{\infty}\frac{y^{2m}}{q^{2m}(1/q^2)_m}=\bar{T}_{\Or},
\end{equation*}
and 
\begin{equation*}
    \bar{D}'_{\Or}=\bar{T}^{-1}_{\Or}\bar{U}'_{\Or},
\end{equation*}
which concludes the proof.
\end{proof}
\end{lemma}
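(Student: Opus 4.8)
The plan is to mirror the generating-function argument of Lemma \ref{fact_D'_u}, obtaining both identities by reading off $D'_{\Or}$ and $U'_{\Or}$ (respectively $\bar{D}'_{\Or}$ and $\bar{U}'_{\Or}$) as specializations of the cycle index $Z_{\Or}$ (respectively its twist $Z^{[1]}_{\Or}$), and then exploiting the product factorisations \eqref{factorisation_ci_sum_ort} and \eqref{factorisation_ci_difference_ort}.

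For the first identity, I would first note that for $a\in\Or^{\epsilon}_m(q)$ the signed partition attached to $z-1$ satisfies $(\lambda^{\pm}_{z-1})'_1=\dim\ker(a-1)$. Consequently $d'_m(q)=\delta'(\Or^+_m(q))+\delta'(\Or^-_m(q))$ is exactly the coefficient of $y^m$ in $Z_{\Or}$ after the substitution $x_{z-1,\lambda^{\pm}}\mapsto q^{-(\lambda^{\pm})'_1}$ with every other variable set to $1$; thus $D'_{\Or}$ equals this specialization of $Z_{\Or}$, the summation over the two Witt types being built into $Z_{\Or}$. Setting all variables to $1$ instead collapses $Z_{\Or}$ to $1+\sum_{m\ge1}2y^m=\frac{1+y}{1-y}$. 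In the factorisation \eqref{factorisation_ci_sum_ort}, the single factor indexed by $\phi=z-1$ becomes $U'_{\Or}$ under $x_{z-1,\lambda^{\pm}}\mapsto q^{-(\lambda^{\pm})'_1}$ and becomes $T_{\Or}$ under $x_{z-1,\lambda^{\pm}}\mapsto1$: its coefficient of $y^m$ is then the combined proportion of unipotent elements of $\Or^+_m(q)$ and $\Or^-_m(q)$, equal to $\frac{1}{q^{\lfloor m/2\rfloor}(1/q^2)_{\lfloor m/2\rfloor}}$ by Lemma \ref{steinberg}. Writing $R$ for the product of all the remaining (untouched) factors, the two evaluations give $\frac{1+y}{1-y}=T_{\Or}\,R$ and $D'_{\Or}=U'_{\Or}\,R$; eliminating $R$ yields $D'_{\Or}=\frac{1+y}{1-y}\,T_{\Or}^{-1}U'_{\Or}$. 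Finally, separating $T_{\Or}$ into even and odd powers of $y$ identifies its even part with $T_{\Sp}$ (see \eqref{T_Sp}), giving $T_{\Or}=(1+y)T_{\Sp}$; the factor $1+y$ cancels and $D'_{\Or}=T_{\Sp}^{-1}(1-y)^{-1}U'_{\Or}$.

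The second identity follows by the identical scheme applied to $Z^{[1]}_{\Or}$ and \eqref{factorisation_ci_difference_ort}. Here setting all variables to $1$ collapses $Z^{[1]}_{\Or}$ to $1$, since the contributions of $\Or^+_m(q)$ and $\Or^-_m(q)$ cancel in each degree; and the $\phi=z-1$ factor produces $\bar{U}'_{\Or}$ under $x_{z-1,\lambda^{\pm}}\mapsto q^{-(\lambda^{\pm})'_1}$ and $\bar{T}_{\Or}$ under $x_{z-1,\lambda^{\pm}}\mapsto1$, the coefficient of $y^{2m}$ being the difference $\frac{1}{q^{2m}(1/q^2)_m}$ of unipotent proportions by Lemma \ref{steinberg}. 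Eliminating the common untouched factor $\bar{R}$ exactly as before gives $1=\bar{T}_{\Or}\,\bar{R}$ and $\bar{D}'_{\Or}=\bar{U}'_{\Or}\,\bar{R}$, hence $\bar{D}'_{\Or}=\bar{T}_{\Or}^{-1}\bar{U}'_{\Or}$.

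I expect the main obstacle to be bookkeeping rather than analytic difficulty: one must check carefully that the chosen specialization of $Z_{\Or}$ (resp.\ $Z^{[1]}_{\Or}$) really records the \emph{sum} (resp.\ \emph{difference}) of the two orthogonal types, that the twisting factors $\tau_{\phi}(\lambda^{\pm})$ interact correctly with the sign choices so that the $z-1$ factor of $Z^{[1]}_{\Or}$ collapses precisely to $\bar{T}_{\Or}$ (uniformly in the cases $q\equiv1$ and $q\equiv3\pmod 4$), and that the all-ones evaluations yield $\frac{1+y}{1-y}$ and $1$ respectively. The only non-formal input is the elementary identification $T_{\Or}=(1+y)T_{\Sp}$, which is responsible for the clean factor $(1-y)^{-1}$ appearing in the first formula.
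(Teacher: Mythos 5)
Your proposal is correct and follows essentially the same argument as the paper: specialize $Z_{\Or}$ and $Z^{[1]}_{\Or}$ at $x_{z-1,\lambda^{\pm}}\mapsto q^{-(\lambda^{\pm})'_1}$ versus all-ones, read off $D'_{\Or},U'_{\Or},T_{\Or}$ (resp.\ $\bar{D}'_{\Or},\bar{U}'_{\Or},\bar{T}_{\Or}$) from the factorisations, eliminate the common untouched factor, and use $T_{\Or}=(1+y)T_{\Sp}$ together with Lemma \ref{steinberg}. The only (harmless) deviation is that you obtain $T_{\Or}=(1+y)T_{\Sp}$ directly by splitting into even and odd powers of $y$, whereas the paper routes this through Euler's identity — your shortcut is valid, since the even part of $T_{\Or}$ is $T_{\Sp}$ by definition.
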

We are now ready to derive the formulas for $\delta(\AO^{\epsilon}_m(q))$ in the odd-characteristic case.
\begin{proof}[Proof of Theorem $\ref{main}\eqref{Orthogonal_odd}-\eqref{Orthogonal_even}$]
 Applying Theorem \ref{conj_identities}\eqref{conj_intro_ort_2m+1}-\eqref{conj_intro_ort_2m}, we obtain:
\begin{align*}
u'_m(q)& =\frac{1}{q^{\lfloor m/2 \rfloor}(1/q^2)_{\lfloor m/2 \rfloor}}-u_m(q) \\
& =
\begin{cases}
 \frac{1}{q^d}\sum_{i=0}^{d}\frac{(-1)^{i}}{q^{i(i-1)}(1/q^2)_{d-i}} & \text{ if }m=2d,\\
\frac{1}{q^{d+1}}\sum_{i=0}^{d}\frac{(-1)^{i}}{q^{i(i+1)}(1/q^2)_{d-i}} & \text{ if }m=2d+1.
\end{cases}
\end{align*}
Now, observe that $U'_{\Or}$ can be factorised as 
\begin{equation}
\label{fatt_ort}
    U'_{\Or}=\widetilde{D}_{\Or}\cdot T_{\Sp},
\end{equation}
where we define
$$\widetilde{D}_{\Or} \coloneqq \sum_{d=0}^{\infty}\frac{(-1)^{\lfloor\frac{d}{2}\rfloor}}{q^{\lceil \frac{d}{2}\rceil^2}}y^d.$$

Indeed,
\begin{align*}
  U'_{\Or}  =\sum_{m=0}^{\infty}u'_m(q)y^m&=\sum_{d=0}^{\infty}\sum_{i=0}^{d}\frac{(-1)^{i}}{q^{i^2}q^{d-i}(1/q^2)_{d-i}}y^{2d}+\sum_{d=0}^{\infty}\sum_{i=0}^{d}\frac{(-1)^{i}}{q^{(i+1)^2}q^{d-i}(1/q^2)_{d-i}}y^{2d+1}\\
  &= \sum_{i=0}^{\infty}\sum_{j=0}^{\infty}\frac{(-1)^i}{q^{i^2}q^j(1/q^2)_j}y^{2(i+j)}+\sum_{i=0}^{\infty}\sum_{j=0}^{\infty}\frac{(-1)^i}{q^{(i+1)^2}q^j(1/q^2)_j}y^{2(i+j)+1}\\
   &= \sum_{i=0}^{\infty}\frac{(-1)^i}{q^{i^2}}y^{2i}\sum_{j=0}^{\infty}\frac{y^{2j}}{q^j(1/q^2)_j}+\sum_{i=0}^{\infty}\frac{(-1)^i}{q^{(i+1)^2}}y^{2i+1}\sum_{j=0}^{\infty}\frac{y^{2j}}{q^j(1/q^2)_j} \\
   &=\left(\sum_{i=0}^{\infty}\frac{(-1)^i}{q^{i^2}}y^{2i}+\sum_{i=0}^{\infty}\frac{(-1)^i}{q^{(i+1)^2}}y^{2i+1}\right)\sum_{j=0}^{\infty}\frac{y^{2j}}{q^j(1/q^2)_j}=\widetilde{D}_{\Or}\cdot T_{\Sp}.
\end{align*}
Note also that 
$$\widetilde{D}_{\Or}=1+(1-y)\sum_{d=0}^{\infty}\frac{(-1)^d}{q^{(d+1)^2}}y^{2d+1}.$$
Applying Eq. \eqref{fatt_ort} and
Lemma \ref{fact_D'_O}, we find: 
\begin{align*}
D'_{\Or}&=T^{-1}_{\Sp}\cdot U'_{\Or} \cdot (1-y)^{-1}= \widetilde{D}_{\Or} \cdot (1-y)^{-1} \\
&=(1-y)^{-1}+\sum_{d=0}^{\infty}\frac{(-1)^d}{q^{(d+1)^2}}y^{2d+1}=\sum_{d=0}^{\infty}y^{2d}+\sum_{d=0}^{\infty}\left(1+\frac{(-1)^d}{q^{(d+1)^2}} \right)y^{2d+1}.
\end{align*}
Hence,
$$d'_m(q)=
\begin{cases}
1 & \text{if }m=2d, \\
1+\frac{(-1)^d}{q^{(d+1)^2}} & \text{if }m=2d+1.
\end{cases}
$$
and therefore,
\begin{equation}
\label{sum}
d_m(q)=2-d'_m(q)=
\begin{cases}
1 & \text{if }m=2d, \\
1+\frac{(-1)^{d-1}}{q^{(d+1)^2}} & \text{if }m=2d+1.
\end{cases}
\end{equation}
For the difference case, similarly, applying Proposition \ref{prop_bar{u}} we obtain:
\begin{align*}
\bar{u}'_{2m}(q)& =\frac{1}{q^{2m}(1/q^2)_m}-\bar{u}_{2m}(q) \\
& =\frac{1}{q^{2m}}\sum_{i=0}^{m}\frac{(-1)^{i}}{q^{i(i-1)}(1/q^2)_{m-i}}.
\end{align*}

Using Lemma \ref{fact_D'_O}, we have $U'_{\Or}=\bar{D}'_{\Or}\cdot \bar{T}_{\Or}$. Observe that
\begin{align*}
\bar{U}'_{\Or}=\sum_{m=0}^{\infty}\sum_{i=0}^{m}\frac{(-1)^{i}}{q^{i(i+1)}q^{2(m-i)}(1/q^2)_{m-i}}y^{2m}&=\sum_{i=0}^{\infty}\frac{(-1)^i}{q^{i(i+1)}}y^{2i}\sum_{j=0}^{\infty}\frac{1}{q^{2j}(1/q^2)_j}y^{2j}\\
& =\sum_{i=0}^{\infty}\frac{(-1)^i}{q^{i(i+1)}}y^{2i}\cdot \bar{T}_{\Or}.
\end{align*}
Hence, 
$$\bar{D}'_{\Or}=\sum_{m=0}^{\infty}\frac{(-1)^m}{q^{m(m+1)}}y^{2m},$$
and for every $m \ge 1$,
$$\bar{d}'_{2m}(q)=\frac{(-1)^m}{q^{m(m+1)}}.$$
Therefore
\begin{equation}
\label{difference}
\bar{d}_{2m}(q)=-\bar{d}'_{2m}(q)=\frac{(-1)^{m-1}}{q^{m(m+1)}}.
\end{equation}
In view of Equations \eqref{sum} and \eqref{difference}, we can finally conclude that, if $m=2d+1$ is odd, then
$$\delta(\AO_{2d+1}(q))=\frac{d_{2d+1}(q)}{2}=\frac{1}{2}+\frac{(-1)^{d-1}}{2q^{(d+1)^2}},$$
and if $m=2d$ is even, then
\begin{equation*}
\delta(\AO^{\pm}_{2d}(q))=\frac{d_{2d}(q)\pm\bar{d}_{2d}(q)}{2}=\frac{1}{2}\pm \frac{(-1)^{d-1}}{2q^{d(d+1)}}.
\qedhere
\end{equation*}
\end{proof}

\section{Affine orthogonal groups in even characteristic}
Throughout this section, $q=2^f$ is a $2$-power.
As discussed in \cite[Sec. 4]{FulmanSaxlTiep12}, Wall \cite{Wall} proved that the $\GL_{2m}(q)$-rational canonical form of an element $a \in \Or_{2m}^{\pm}(q)$ is described by the following combinatorial data.
To each monic, non-constant irreducible polynomial $\phi$ over $\mathbb{F}_q$, the element $a$ associates a partition $\lambda_{\phi}=\lambda_{\phi}(a)$, determined by its rational canonical form. Recall the definition of $\bar{\phi}$ given in Eq. \eqref{bar_phi}. The collection $(\lambda_{\phi})_{\phi}$ represents a conjugacy class in either $\Or_{2m}^{+}(q)$ or $\Or_{2m}^{-}(q)$ if 
\begin{enumerate}[label=(\roman*)]
    \item $|\lambda_z|=0,$
    \item $\sum_{\phi} |\lambda_{\phi}|\deg (\phi)=2m,$
    \item $\lambda_{\phi}=\lambda_{\bar{\phi}},$
    \item \text{the odd parts of $\lambda_{z-1}$ occur with even multiplicity}.
\end{enumerate}

The cycles indices of the  sum and the difference of the orthogonal groups in even characteristic are

$$1+\sum_{m=1}^{\infty}\frac{y^{2m}}{|\Or^{+}_{2m}(q)|}\sum_{a \in \Or^{+}_{2m}(q)}\prod_{\phi}x_{\phi,\lambda_{\phi}(a)}\pm\sum_{m=1}^{\infty}\frac{y^{2m}}{|\Or^{+}_{2m}(q)|}\sum_{a \in \Or^{-}_{2m}(q)}\prod_{\phi}x_{\phi,\lambda_{\phi}(a)}.
$$
We denote the cycle index of the sum by $Z_{\Or(2)}$ (that is, the quantity obtained by choosing the sign $+$ in the expression above) and the cycle index  of the difference by $Z^{[1]}_{\Or(2)}$,  in accordance with the notation used for the odd characteristic case.
In \cite{FulmanSaxlTiep12}, the following factorizations for $Z_{\Or(2)}$ and $Z^{[1]}_{\Or(2)}$ were obtained.

\begin{thm}[\cite{FulmanSaxlTiep12}, Theorem 4.1]
\begin{align*}
Z_{\Or(2)}=&\sum_{\substack{|\lambda|=2m\\ i \text{ odd }\Rightarrow m_i \text{ even}}}x_{\phi,\lambda}\frac{y^{|\lambda|}}{q^{\frac{1}{2}\sum_i(\lambda'_i)^2+\frac{1}{2}o(\lambda)-\lambda'_1}\prod_i(1/q^2)_{\lfloor \frac{m_i(\lambda)}{2} \rfloor}}
\\ &\cdot \prod_{\substack{\phi=\bar{\phi}\\\phi \neq z-1}}\sum_{\lambda}x_{\phi,\lambda}\frac{(-y^{\deg \phi)|\lambda|}}{c_{\GL,-q^{\deg \phi/2}}(\lambda)} \nonumber
\cdot
\prod_{\substack{\{\phi,\bar{\phi}\}\\\phi \neq \bar{\phi}}}\sum_{\lambda}x_{\phi,\lambda}x_{\bar{\phi},\lambda}
\frac{y^{2|\lambda|\deg \phi}}{c_{\GL,q^{\deg \phi}}(\lambda)}; \\
Z^{[1]}_{\Or(2)}=&\sum_{\substack{|\lambda|=2m\\ \text{all $m_i$ even}}}x_{\phi,\lambda}\frac{y^{|\lambda|}}{q^{\frac{1}{2}\sum_i(\lambda'_i)^2}\prod_i(1/q^2)_{ \frac{m_i(\lambda)}{2}}}
\\ &\cdot \prod_{\substack{\phi=\phi^*\\\phi \neq z-1}}\sum_{\lambda}x_{\phi,\lambda}\frac{y^{\deg \phi|\lambda|}}{c_{\GL,-q^{\deg \phi/2}}(\lambda)} \nonumber
\cdot
\prod_{\substack{\{\phi,\bar{\phi}\}\\\phi \neq \bar{\phi}}}\sum_{\lambda}x_{\phi,\lambda}x_{\bar{\phi},\lambda}
\frac{y^{2|\lambda|\deg \phi}}{c_{\GL,q^{\deg \phi}}(\lambda)}.
\end{align*}
    
\end{thm}

\subsection{Derangements of 2-power order: proof of Theroem \ref{p_main}\eqref{p_Orthogonal_2meven}}
\begin{prop}
\label{u'_2m(q)orth}
The following hold:
\begin{enumerate}[label=\roman*.]
    \item 
$\displaystyle u'_{2m}(q)=\sum_{\substack{|\lambda|=2m\\i \text{ odd}\Rightarrow m_i \text{ even}}}\frac{1}{q^{\frac{1}{2}\sum_i (\lambda'_i)^2+\frac{1}{2}o(\lambda)}\prod_i(1/q^2)_{\lfloor\frac{m_i}{2}\rfloor}}=\frac{1}{q^m(1/q^2)_m},$ \\
that is, $u'_m(q)$ is equal to the proportion of unipotent elements of $\Sp_{2m}(q)$;
\item 
$\displaystyle
\bar{u}_{2m}(q)=\sum_{\substack{|\lambda|=2m\\ \text{all }m_i \text{ even}}}\frac{1-q^{-\lambda'_1}}{q^{\frac{1}{2}\sum_i (\lambda'_i)^2}\prod_i(1/q^2)_{\frac{m_i}{2}}}=\frac{1}{q^{2m}}\sum_{i=1}^{m}\frac{(-1)^{i-1}}{q^{i(i-1)}(1/q^2)_{m-i}}.$
\end{enumerate}
\begin{proof}
It follows from the definition of the cycle index $Z_{\Or(2)}$ that $u'_{2m}(q)$ is the coefficient of $y^{2m}$ in 
$$\sum_{\substack{|\lambda|=2m\\ i \text{ odd }\Rightarrow m_i \text{ even}}}x_{z-1,\lambda}\frac{y^{|\lambda|}}{q^{\frac{1}{2}\sum_i(\lambda'_i)^2+\frac{1}{2}o(\lambda)-\lambda'_1}\prod_i(1/q^2)_{\lfloor\frac{m_i}{2}\rfloor}}$$
when we substitute all variables $x_{z-1,\lambda}$ with $q^{-\lambda'_1}$. This gives the first equality for $u'_{2m}(q)$. To obtain the second equality, note that
$$\sum_{\substack{|\lambda|=2m\\i \text{ odd}\Rightarrow m_i \text{ even}}}\frac{1}{q^{\frac{1}{2}\sum_i (\lambda'_i)^2+\frac{1}{2}o(\lambda)}\prod_i(1/q^2)_{\lfloor\frac{m_i}{2}\rfloor}}$$
is the coefficient of $y^{2m}$ in the factor
$$\sum_{\substack{|\lambda|=2m\\ i \text{ odd }\Rightarrow m_i \text{ even}}}x_{z-1,\lambda}\frac{y^{|\lambda|}}{q^{\frac{1}{2}\sum_i(\lambda'_i)^2+\frac{1}{2}o(\lambda)
}\prod_i(1/q^2)_{\lfloor\frac{m_i}{2}\rfloor}}$$
of $Z_{\Sp}$, when we replace all variables $x_{z-1,\lambda}$ by 1, and from the definition of this cycle index, this is exactly the proportion of unipotent elements in $\Sp_{2m}(q)$, which equals $\frac{1}{q^m(1/q^2)_m}$ by Lemma \ref{steinberg}\eqref{p_sp}.

The second statement follows from Proposition \ref{prop_bar{u}}, as  $\bar{u}_{2m}(q)$ has the same expression regardless of the characteristic of $\mathbb{F}_q$.
\end{proof}
\end{prop}

\begin{proof}[Proof of Theorem \ref{p_main}\eqref{p_Orthogonal_2meven}]
Recalling the definition of $u'_{2m}(q)$, from Eq. \eqref{sum_unip_qeven} and Proposition \ref{u'_2m(q)orth} we have
\begin{align*}
u_{2m}(q)&=\Delta_u(\Or^{+}_{2m}(q))+\Delta_u(\Or^{-}_{2m}(q))-u'_{2m}(q)\\&=\frac{1}{q^m(1/q^2)_m}+\frac{1}{q^{m-1}(1/q^2)_{m-1}}-\frac{1}{q^m(1/q^2)_m}=\frac{1}{q^{m-1}(1/q^2)_{m-1}}.
\end{align*}
Hence,
\begin{align*}
\delta_2(\AO^{\pm}(q))&=\frac{u_{2m}(q)\pm\bar{u}_{2m}(q)}{2}\\&=\frac{1}{2q^{m-1}(1/q^2)_{m-1}}\pm \frac{1}{2q^{2m}}\sum_{i=1}^{m}\frac{(-1)^{i-1}}{q^{i(i-1)}(1/q^2)_{m-i}}. \qedhere
\end{align*}
\end{proof}

\subsection{Derangements: proof of Theorem \ref{main}\eqref{Orthogonal_even} }
Recall the definition of $\bar{T}_{\Or}$ in Eq. \eqref{bar{T}}.
\begin{lemma}
We have
\label{fact_D'_Oeven}
\begin{align*}
D'_{\Or}=\frac{1}{1-y^2}, \\
\bar{D}'_{\Or}=\bar{T}_{\Or}^{-1}\bar{U}'_{\Or}.
\end{align*}
\begin{proof}
We prove the equality for $D'_{\Or}$. The proof of the equality for $\bar{D}'_{\Or}$ is the same as in Lemma \ref{fact_D'_O}, as all quantities are the same as in the odd-characteristic case.

Let \begin{align*}
T_{\Or(2)}\coloneqq& \sum_{m=0}^{\infty}\left(\Delta_u(\Or^{+}_{2m}(q))+\Delta_u(\Or^{-}_{2m}(q))\right)y^{2m}\\=&\sum_{m=0}^{\infty}\left(\frac{1}{q^m(1/q^2)_m}+\frac{1}{q^{m-1}(1/q^2)_{m-1}}\right)y^{2m} 
\end{align*}
be the generating function for the sum of the proportions of unipotent elements in $\Or^{+}_{2m}(q)$ and $\Or^{-}_{2m}(q)$. Recalling the definition of $T_{\Sp}$ in Eq. \eqref{T_Sp},
\begin{equation}
\label{T_O=(1+y^2)T_Sp}
T_{\Or(2)}=(1+y^2)T_{\Sp}.
\end{equation}
Observe that, setting all variables $x_{\phi, \lambda}$ and $x_{\bar{\phi}, \lambda}$ equal to 1 in $Z_{\Or(2)}$ yields $\frac{1+y^2}{1-y^2}$, and
if we replace the variables $x_{z-1,\lambda}$ by 1 in the factor
$$\sum_{\substack{|\lambda|=2m\\ i \text{ odd }\Rightarrow m_i \text{ even}}}x_{\phi,\lambda}\frac{y^{|\lambda|}}{q^{\frac{1}{2}\sum_i(\lambda'_i)^2+\frac{1}{2}o(\lambda)-\lambda'_1}\prod_i(1/q^2)_{\lfloor \frac{m_i}{2} \rfloor}}$$
of $Z_{\Or(2)}$, the coefficient of $y^{2m}$ in this sum equals $\Delta_u(\Or^{+}_{2m}(q))+\Delta_u(\Or^{-}_{2m}(q))$. Hence,
$$D'_{O(2)}=T^{-1}_{O(2)}\frac{1+y^2}{1-y^2}U'_{O(2)}.$$
By Proposition \ref{u'_2m(q)orth}, $U'_{\Or(2)}=T_{\Sp}$ and applying Eq. \eqref{T_O=(1+y^2)T_Sp}, we conclude that
$$D'_{O(2)}=\frac{1}{1-y^2}.$$
\end{proof}
\end{lemma}
\begin{proof}[Proof of Theorem \ref{main}\eqref{Orthogonal_even}]
By Lemma \ref{fact_D'_Oeven}, $d'_{2m}(q)$ and $\bar{d}'_{2m}(q)$ have the same expression regardless of the parity of the characteristic of $\mathbb{F}_q$. Hence, as in the odd-characteristic case, we conclude that
$$\delta(\AO^{\pm}_{2m}(q))=\frac{1}{2} \pm \frac{(-1)^{m-1}}{2q^{m(m+1)}}.$$
\end{proof}

\bibliographystyle{plain}
\bibliography{bibliography}

\end{document}